\documentclass[12pt]{amsart}
\usepackage{amssymb}  
\usepackage{latexsym} 
\usepackage[all]{xy}
\usepackage{comment}
\usepackage{url}

\newcommand{\Aff}{{\mathbb A}}
\newcommand{\Aut}{\operatorname{Aut}}
\newcommand{\C}{{\mathbb C}}
\newcommand{\F}{{\mathbb F}}
\newcommand{\Gal}{\operatorname{Gal}}
\newcommand{\GL}{\operatorname{GL}}
\newcommand{\HH}{{\mathcal H}}
\newcommand{\isom}{\cong}
\newcommand{\K}{{\mathbb K}}
\newcommand{\Kbar}{\overline{K}}

\newcommand{\PGL}{\operatorname{PGL}}

\newcommand{\PP}{{\mathbb P}}
\newcommand{\Q}{{\mathbb Q}}
\newcommand{\ra}{\longrightarrow}
\newcommand{\rank}{{\operatorname{rank}}}
\newcommand{\rhobar}{{\overline{\rho}}}
\newcommand{\SL}{\operatorname{SL}}
\newcommand{\x}{{\mathbf x}}
\newcommand{\Z}{{\mathbb Z}}
\newcommand{\ZZ}{{\mathcal Z}}
\newcommand{\zz}{\zeta}

\newenvironment{ProofOf}[1]{\par\noindent{\em Proof of #1.}}%
                       {\hspace*{\fill}\nobreak$\Box$\par\medskip}

\newtheorem{Proposition}{Proposition}[section]
\newtheorem{Theorem}[Proposition]{Theorem}
\newtheorem{Lemma}[Proposition]{Lemma}

\theoremstyle{definition}

\newtheorem{Remark}[Proposition]{Remark}
\newtheorem{Example}[Proposition]{Example}

\addtolength{\hoffset}{-1cm}
\addtolength{\textwidth}{2cm}

\begin{document}
\date{29th April 2015}
\title[On families of $9$-congruent elliptic curves]
{On families of $9$-congruent elliptic curves}

\author{Tom~Fisher}
\address{University of Cambridge,
          DPMMS, Centre for Mathematical Sciences,
          Wilberforce Road, Cambridge CB3 0WB, UK}
\email{T.A.Fisher@dpmms.cam.ac.uk}

\renewcommand{\baselinestretch}{1.1}
\renewcommand{\arraystretch}{1.3}

\renewcommand{\theenumi}{\roman{enumi}}

\begin{abstract}
  We compute equations for the families of elliptic curves
  $9$-congruent to a given elliptic curve. We use these to find
  infinitely many non-trivial pairs of $9$-congruent elliptic curves
  over $\Q$, i.e. pairs of non-isogenous elliptic curves over $\Q$
  whose $9$-torsion subgroups are isomorphic as Galois modules.
\end{abstract}

\maketitle

\section*{Introduction}

Elliptic curves $E_1$ and $E_2$ are {\em $n$-congruent} if their
$n$-torsion subgroups $E_1[n]$ and $E_2[n]$ are isomorphic as Galois
modules. They are {\em directly $n$-congruent} if the isomorphism
$\phi : E_1[n] \to E_2[n]$ respects the Weil pairing $e_n$ and {\em
  reverse $n$-congruent} if $e_n(\phi P,\phi Q) = e_n(P,Q)^{-1}$ for
all $P,Q \in E_1[n]$.  The elliptic curves directly and reverse
$n$-congruent to a given elliptic curve $E$ are parametrised by the
modular curves $Y_E(n) = X_E(n) \setminus \{ \text{cusps} \}$ and
$Y_E^-(n) = X_E^-(n) \setminus \{ \text{cusps} \}$.  Finding equations
for these modular curves can help with finding non-trivial pairs of
$n$-congruent elliptic curves.  Some of the potential applications are
described in \cite{BS}, \cite{BHLS}, \cite{CM}, \cite{Frey},
\cite{PSS}.

Equations for $X_E(n)$, and the family of curves it parametrises, have
been computed for $n = 2,3,4,5,6,7,8, 11$ by Rubin and Silverberg
\cite{RubinSilverberg}, \cite{RubSil6}, \cite{RubinSilverberg2},
\cite{Silverberg}, Papadopoulos \cite{Pap6}, Halberstadt and Kraus
\cite{HK}, Chen \cite{Chen8}, and Fisher~\cite{7and11}. The analogous
problem for $n$-congruences that do not respect the Weil pairing has
been solved for the same values of $n$. The additional references for
this include papers by Fisher \cite{g1hess}, \cite{enqI}, Bruin and
Doerksen \cite[Section 7]{BD}, and Poonen, Schaefer and Stoll
\cite[Section 7.2]{PSS}.

In this paper we treat the case $n=9$. That is, we give equations for
$X_E(9)$ and $X^{-}_E(9)$, and for the families of curves they
parametrise. We use these formulae to exhibit non-trivial triples of
$9$-congruent elliptic curves over $\Q$, and non-trivial pairs of
$9$-congruent elliptic curves over $\Q(T)$.  We also give equations
for the modular diagonal quotient surfaces whose points parametrise 
pairs of $9$-congruent elliptic curves.

We work over a field $K$ of characteristic $0$. We write $\Kbar$ for
the algebraic closure, and $\zeta_n$ for a primitive $n$th root of
unity.

\section{Statement of results}
\label{sec:statres}

If $n=3$ or $9$ then every element of $(\Z/n\Z)^\times$ is either plus
or minus a square. Therefore any pair of $n$-congruent elliptic curves are
either directly or reverse $n$-congruent.  In the case $n=3$, we have
$X_E^\pm(3) \isom \PP^1$ and the families of curves parametrised are
as follows.
\begin{Theorem}
\label{MainThm3}
Let $E/K$ be the elliptic curve $y^2 = x^3 - 27 c_4 x - 54 c_6$.  Then
the family of elliptic curves parametrised by $Y^{\pm}_E(3)$ is
\begin{equation}
\label{W3}
 y^2 =  x^3 - 27 A^\pm(r,s)  x - 54 B^\pm(r,s) 
\end{equation}
where
\begin{align*}
  A^+(r,s) & = c_4 r^4 + 4 c_6 r^3 s + 6 c_4^2 r^2 s^2 + 4 c_4 c_6 r
  s^3 - (3 c_4^3 - 4 c_6^2) s^4, \\
  B^+(r,s) & = c_6 r^6 + 6 c_4^2 r^5 s + 15 c_4 c_6 r^4 s^2 + 20 c_6^2
  r^3 s^3 \\ &~\qquad + 15 c_4^2 c_6 r^2 s^4 + 6(3 c_4^4 - 2 c_4
  c_6^2) r s^5 + (9 c_4^3 c_6 - 8 c_6^3) s^6,
\end{align*}
and 
\begin{align*}
  A^-(r,s) & = -4 (r^4 - 6 c_4 r^2 s^2 - 8 c_6 r s^3
  - 3 c_4^2 s^4 ) /(c_4^3-c_6^2), \\
  B^-(r,s) & = -8 B^+(r,s)/(c_4^3-c_6^2)^2.
\end{align*}
\end{Theorem}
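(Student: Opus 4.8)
The plan is to realise $X_E^\pm(3)$ as twists of the modular curve $X(3)\isom\PP^1$ and to twist the universal elliptic curve along with them, so that the Weierstrass data of the claimed family drops out of the twist. I would begin from the Hesse pencil $C_\mu\colon u^3+v^3+w^3=3\mu u v w$, all of whose smooth members share the same nine flexes; these flexes carry the universal symplectic level-$3$ structure, and the stabiliser of the pencil acting symplectically on them induces a copy of $\PSL_2(\F_3)\isom A_4$ on the $\mu$-line $\isom X(3)$. The symplectic Galois action on $E[3]$ then defines a cocycle valued in $\SL_2(\F_3)$, and twisting this $\PP^1$ by its image yields $X_E(3)$; because $E$ itself supplies a rational base point, I may choose coordinates $(r:s)$ so that $(1:0)$ records the standard level structure on $E$. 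The substitution $s=0$ in the claimed formulas returns $A^+=c_4r^4$, $B^+=c_6r^6$, i.e. $E$ again, which is the compatibility check fixing this normalisation.

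For the direct family I would compute the twisted Weierstrass invariants. After putting the pencil in Weierstrass form, the quantities $c_4,c_6$ of the universal curve are weight-$4$ and weight-$6$ modular forms for $\Gamma(3)$; in the coordinate $(r:s)$ they become a binary quartic and a binary sextic, and twisting by the cocycle attached to $E[3]$ replaces their coefficients by the corresponding covariants of $E$. Thus I expect $A^+$ and $B^+$ to be precisely these weight-$(4,6)$ covariants evaluated on the twisted standard representation: the binomial pattern $1,4,6,4$ in $A^+$ reflects the underlying $\mathrm{Sym}^4$-structure, and one recognises the Hessian $c_4X^2+6c_6XY+9c_4^2Y^2$ of the Weierstrass cubic among the building blocks. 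The decisive point is then to confirm that $E'\colon y^2=x^3-27A^+x-54B^+$ is genuinely directly $3$-congruent to $E$, by exhibiting the Galois-equivariant map $\phi\colon E[3]\to E'[3]$ arising from the twist and checking that it respects $e_3$. This is the step I expect to be the main obstacle, since making the abstract cocycle fully explicit, and honestly verifying Weil-pairing compatibility rather than mere isomorphism of the two representations, is where the computation has real content.

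To see the family is complete I would argue by degrees. The induced map to the $j$-line sends $(r:s)$ to the ratio of the degree-$12$ forms $(A^+)^3$ and $(A^+)^3-(B^+)^2$, the denominator being proportional to the discriminant of $E'$, so it has degree $12$, exactly the degree of the forgetful cover $X(3)\to X(1)$. Since $X_E(3)$ is a $\PP^1$ and the family map is a morphism of $\PP^1$'s compatible with the two maps to $X(1)$, it must have degree $1$ and hence be an isomorphism onto $X_E(3)$; that the discriminant is not identically zero confirms the members are genuine elliptic curves.

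Finally I would deduce the reverse family from the direct one. Reversing the Weil pairing amounts to composing the level structure with an anti-symplectic element, equivalently to the duality $E[3]\isom\mathrm{Hom}(E[3],\mu_3)$, and this identifies $Y_E^-(3)$ with the twist of $Y_E(3)$ by the class of the discriminant. Since $c_4^3-c_6^2$ is proportional to $\Delta_E$, this is the source of the denominators: twisting the quartic and sextic data by it produces $A^-=-4(r^4-6c_4r^2s^2-8c_6rs^3-3c_4^2s^4)/(c_4^3-c_6^2)$ and $B^-=-8B^+/(c_4^3-c_6^2)^2$, with the numerator of $A^-$ equal, up to the scaling $s\mapsto 3s$, to the $3$-division polynomial of $E$ — just as one must expect once the orientation has been reversed. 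The same degree argument shows this family parametrises all of $Y_E^-(3)$.
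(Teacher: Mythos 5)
The first thing to say is that the paper does not reprove this theorem at all: its proof is a two-line citation, attributing the direct family to Rubin and Silverberg \cite{RubinSilverberg} and taking both sets of formulae from \cite[Sections 8, 9 and 13]{g1hess} via the substitutions $(r,s)=(\lambda,\mu)$, respectively $(r,s)=(c_6\xi+c_4^2\eta,\,-c_4\xi-c_6\eta)$. So an honest blind proof has to reproduce the content of those sources, and this is where your proposal has a genuine gap. Your framework --- twist the Hesse pencil together with its level-$3$ structure by the cocycle attached to $E[3]$ --- is the right one and is in the spirit of the cited derivation, and your peripheral checks are correct: $(r:s)=(1:0)$ does recover $E$; the degree argument for completeness is sound, since $(A^+)^3-(B^+)^2=(c_4^3-c_6^2)\,f_+(r,s)^3$ with $f_+$ coprime to $A^+$ (this $f_+$ is exactly the quartic the paper later identifies as cutting out the cusps of $X_E(3)$), so the $j$-map has degree $12=\deg\big(X(3)\to X(1)\big)$; and the numerator of $A^-$ is indeed the $3$-division polynomial up to rescaling. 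But the decisive step --- producing the trivialisation $h\in\GL_2(\Kbar)$ of the cocycle $\sigma\mapsto\rhobar(\sigma(\phi)\phi^{-1})$, transporting the Hesse family through it to get the explicit quartic and sextic, and verifying that every fibre carries a Galois-equivariant, Weil-pairing-preserving isomorphism with $E[3]$ --- is precisely what you defer as ``the main obstacle''. The identification of $A^+,B^+$ as ``the weight-$(4,6)$ covariants'' is pattern-matching of binomial coefficients, not a derivation, and the degree argument presupposes the congruence it cannot establish: it only shows that \emph{if} the family consists of curves directly $3$-congruent to $E$, then it fills up $Y_E(3)$. So the direct case is a plan, not a proof.

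The reverse case is worse than incomplete: ``the twist of $Y_E(3)$ by the class of the discriminant'' is not a correct mechanism as stated. Both $X_E^{\pm}(3)$ are $K$-isomorphic to $\PP^1$ (as the paper notes in Section~\ref{sec:statres}), so as a twist of the base curve there is nothing to see; what differs is the family over it, and a quadratic twist of the direct family cannot produce it, since quadratic twisting preserves the symplectic type of the mod-$3$ representation (the character squares away against the pairing). This is visible in the formulae themselves: after clearing denominators, $A^-$ is proportional to $f_+$, which is \emph{not} proportional to the quartic $A^+$, so no rescaling $(A,B)\mapsto(d^2A,d^3B)$ with the same parameter $(r:s)$ relates the two families --- a nontrivial reparametrisation is also needed, which is exactly the substitution $(r,s)=(c_6\xi+c_4^2\eta,\,-c_4\xi-c_6\eta)$ in the paper's citation. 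Moreover, the class of $\Delta$ governing the direct/reverse dichotomy at $3$ is cubic, not quadratic: compare Lemma~\ref{lem:E3}(ii), where passing to the reverse case replaces $\Delta$ by $\Delta^{-1}$, and the proof of Theorem~\ref{prop:main3}, where the reverse equations are obtained from the direct ones via the inverse-transpose rule $h\mapsto h^{-T}$ of Lemma~\ref{lem:dual} and involve $\sqrt[3]{\Delta}$. Your duality remark $E[3]\cong\operatorname{Hom}(E[3],\mu_3)$ points in the right direction, but it is never converted into formulas: the displayed $A^-,B^-$ are asserted, not derived.
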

\begin{proof} In the direct case this family of curves was first
  computed by Rubin and Silverberg \cite{RubinSilverberg}. The above
  formulae are taken from \cite[Sections 8, 9 and 13]{g1hess}, with
  $(r,s) = (\lambda,\mu)$, respectively $(r,s) = (c_6 \xi + c_4^2
  \eta,-c_4 \xi - c_6 \eta)$.  These formulae are available in Magma
  \cite{Magma} via the function {\tt HessePolynomials}.
\end{proof}

Our new results are in the case $n=9$, where the curves $X^{\pm}_E(9)$
have genus 10. We write these curves as the complete intersection of
two cubics in $\PP^3$.

\begin{Theorem}
\label{MainThm9}
Let $E/K$ be the elliptic curve $y^2 = x^3 + a x + b$.  Then
$X^{\pm}_E(9) = \{F^{\pm}_1 = F^{\pm}_2 = 0 \} \subset \PP^3$ where
\begin{equation*}
\begin{aligned}
 F^+_1(x,y,z,t)  & =  
    x^2 t +  6 x y z + 6 b x t^2 + 6 y^3 - 9 a y^2 t + 6 a^2 y t^2
   - 3 b z^3 \\ & \qquad  + 3 a^2 z^2 t  
         + 9 a b z t^2  - (a^3 - 12 b^2) t^3, \\
 F^+_2(x,y,z,t)  & = 
    x^2 z  +  6 x y^2 - 6 a x y t + 2 a^2 x t^2 - 9 a y^2 z - 18 b y z^2 
 + 12 a^2 y z t \\ & \qquad  + a^2 z^3 + 9 a b z^2 t - 3 a^3 z t^2 + a^2 b t^3,
\end{aligned}
\end{equation*}
and 
\begin{equation*}
\label{XEm9}
\begin{aligned}
 F^{-}_1(x,y,z,t) & =  
    9 x^2 y + 3 x^2 z - 6 a x y t + 6 b x t^2 - 6 a y^3 
      + 27 b y^2 t + 3 a y z^2 \\ & \qquad  + 
        18 b y z t + 3 a^2 y t^2 + a z^3 + 3 b z^2 t + a^2 z t^2 - a b t^3, \\
  F^{-}_2(x,y,z,t) & =    x^3 + 6 a x y z + 18 b x y t + 3 a x z^2 + 6 b x z t 
  + a^2 x t^2 + 9 b y^3 + 6 a^2 y^2 t \\ & \qquad - 9 b y z^2 + 6 a^2 y z t - 3 a b y t^2 
  - 4 b z^3 + 2 a^2 z^2 t + 2 b^2 t^3.
\end{aligned}
\end{equation*}
\end{Theorem}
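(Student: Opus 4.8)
The plan is to realise $X^{\pm}_E(9)$ as explicit twists of the modular curve $X(9)$. Over $\Kbar$, after choosing a symplectic basis of $E[9]$, one has $X^{\pm}_E(9) \isom X(9)$, so I would first fix a good model of $X(9)$ itself. Since $X(9)$ has genus $10$, the presentation forces the embedding into $\PP^3$ to be half-canonical: for a $(3,3)$ complete intersection $C = \{F_1 = F_2 = 0\}$ adjunction gives $\Omega^1_C \isom \mathcal{O}(2)|_C$ with $\deg \mathcal{O}(1)|_C = 9$, so $\mathcal{O}(1)$ restricts to a theta characteristic $L$ (a square root of $\Omega^1$) with $h^0(L)=4$. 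The four-dimensional space $H^0(X(9),L)$ carries a projective action of $\SL_2(\Z/9\Z)$, i.e. a genuine representation $W$ of a central extension $\widetilde{G}$ (a theta group). One records as a first consistency check that the resulting $(3,3)$ complete intersection has exactly the invariants of $X(9)$: degree $9$, $\Omega^1 \isom \mathcal{O}(2)$, genus $10$, and one verifies smoothness for generic $E$.

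Second, I would carry out the twist by the class of $E[9]$. The Galois module $E[9]$ supplies a cocycle valued in $\mathrm{GL}_2(\Z/9\Z)$, symplectically normalised for the $+$ case and with the opposite pairing for the $-$ case, and twisting $\PP(W)$ by its image in $\PGL(W)$ returns a form of $\PP^3$. The decisive structural input is the scaling action: the substitution $(a,b)\mapsto(\lambda^4 a,\lambda^6 b)$ is induced by the diagonal torus, and matching monomials shows the coordinates $x,y,z,t$ must carry weights $3,2,1,0$. Equivariance then forces the coefficients of the twisted cubics to be polynomials in $a,b$, weighted-homogeneous of degrees $6$ and $7$ respectively --- precisely the shape of $F^{\pm}_1$ and $F^{\pm}_2$. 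This both explains the form of the answer and reduces the computation to finitely many rational coefficients.

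Third, I would pin down those coefficients and confirm the moduli interpretation. The remaining constants can be fixed by specialisation: evaluating at curves $E$ with extra automorphisms, comparing against the forgetful map $X^{\pm}_E(9) \to X^{\pm}_E(3) \isom \PP^1$ induced by $E[9] \twoheadrightarrow E[3]$ (whose target family is given by Theorem~\ref{MainThm3}, using the compatibility $e_9(P,Q)^3 = e_3(3P,3Q)$), and matching a few low-weight monomials. To clinch the identification I would exhibit the universal family: construct over the function field of the candidate curve an elliptic curve $E'$ with an isomorphism $E[9]\to E'[9]$, and check directly that it respects the Weil pairing $e_9$ in the $+$ case and inverts it in the $-$ case. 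This last check is what separates $F^{+}_i$ from $F^{-}_i$.

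The hardest step is the explicit descent in the second stage. Because the embedding is by a theta characteristic, the action on $W$ is only projective, so the relevant group is the central extension $\widetilde{G}$ rather than $\SL_2(\Z/9\Z)$; realising the twisted $\PP^3$ as an honest projective space over $K$ (not merely a Brauer--Severi variety) and descending the equations to coefficients in $\Q[a,b]$ requires choosing compatible trivialisations and tracking the field $\Q(\zeta_9)$ with care. A secondary obstacle is completeness: one must verify that the smooth locus of $\{F^{\pm}_1 = F^{\pm}_2 = 0\}$ accounts for \emph{every} $9$-congruence and correctly matches the cusps together with the fibres over $j=0,1728$, rather than parametrising only an open part.
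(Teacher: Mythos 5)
Your overall framing---realise $X^{\pm}_E(9)$ as a twist of an explicit model of $X(9)\subset\PP^3$, with the reverse case obtained from the direct one by dualising---is indeed the paper's framing (its Lemmas~\ref{TwistLem} and~\ref{lem:dual} formalise exactly this, and the symmetry of $\rhobar(S)$, $\rhobar(T)$ plays the role you assign to the theta group). But your plan stalls at precisely the step you yourself identify as hardest, and the paper's actual engine for getting past it is absent from your proposal. The paper does \emph{not} attempt to trivialise the projective descent for general $E$. Instead it first reduces to the case $E(K)[3]\neq 0$, where $K(E[3])=K(\zeta_3,\sqrt[3]{\Delta})$ (Lemma~\ref{lem:3tors}); there the key observation is that the covering $X(9)\to X(3)$ is an explicit cyclic cubic cover, $\K_9=\K_3(\sqrt[3]{B})$ with $B$ generated by functions supported on the four cusps, so the twisted curve is cut out by the radical equation $t-(\sqrt[3]{\Delta})^{\pm 1}=c_{\pm}(E)\bigl(u+v\sqrt[3]{\Delta}+w(\sqrt[3]{\Delta})^2\bigr)^3$, and the constant $c_+(E)=t_E-\sqrt[3]{\Delta}$ is pinned down not by specialisation at special curves but by the tautological rational point that $E$ itself defines on $X_E(9)$ above $t=t_E$. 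The minus case then follows from Lemma~\ref{lem:dual} via an explicit diagonalisation, and explicit $4\times 4$ linear substitutions (with nonvanishing determinant, by the discriminant condition) convert these radical equations into the stated $F^{\pm}_i$. Finally, the hypothesis $E(K)[3]\neq 0$ is removed by a restriction--corestriction argument: Lemma~\ref{lem:coprime} produces $L/K$ of degree coprime to $3$ with $E(L)[3]\neq 0$, and Lemma~\ref{lem:cores} (applied with $A$ the $3$-group of deck transformations of $X(9)\to X(3)$, and with the forgetful map of Theorem~\ref{thm:geom} supplying the required diagram over $K$) shows the relevant twist class in $H^1(K,A)$ is trivial. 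None of this machinery appears in your proposal, and without it (or a genuine resolution of the Brauer--Severi/theta-group descent you defer) the argument is incomplete.

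Two further concrete problems. First, your weight bookkeeping is wrong in the reverse case: by Remark~\ref{rem:scale}, under $(a,b)\mapsto(\lambda^2a,\lambda^3b)$ the coordinates for $F^{-}_i$ carry weights $(2,1,1,0)$ and $F^-_1,F^-_2$ have weights $5$ and $6$, not the $(3,2,1,0)$ pattern with degrees $6$ and $7$ that you assert uniformly; so equivariance does not force ``precisely the shape'' of both answers, and in any case homogeneity only constrains the finitely many coefficients---your scheme for determining them (CM specialisations plus ``matching a few low-weight monomials'') is not shown to be effective, nor to certify that the resulting curve is the modular curve rather than some other curve with the same symmetries. Second, your clinching step---constructing the universal family over the candidate curve and verifying the behaviour of $e_9$ directly---is much harder than anything the paper does: the paper never writes down the family over $X_E(9)$ (Theorem~\ref{thm:geom} reduces it to the $n=3$ family via the forgetful map), and it distinguishes the direct from the reverse case purely through the inverse-transpose twist $h^{-T}$ of Lemma~\ref{lem:dual}, inherited from \cite{7and11}.
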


The families of curves parametrised by $X_E(9)$ and $X_E^-(9)$ are
given by Theorem~\ref{MainThm3} and the following. By abuse of
notation we write $P$ both for a point in $\PP^3$ and for a vector
representing this point.

\begin{Theorem}
\label{thm:geom}
Let $X^{\pm}_E(3)$ and $X^{\pm}_E(9)$ be as described in
Theorems~\ref{MainThm3} and~\ref{MainThm9}, with $a = -27c_4$ and $b =
-54 c_6$.  For $P \in X^{\pm}_E(9)$ with tangent line $P + \lambda Q$
we write $F^{\pm}_i(P+ \lambda Q) = \gamma_i \lambda^2 + \delta_i
\lambda^3$ for $i=1,2$.  Then the forgetful map $X^{\pm}_E(9) \to
X^{\pm}_E(3)$ is given by $(r : s )= (\gamma_2: 3\gamma_1)$.
\end{Theorem}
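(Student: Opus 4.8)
The plan is to read the forgetful map off the pencil of cubics cutting out $X^{\pm}_E(9)$, using the second-order data $\gamma_i$ exactly as in the statement. Since a curve $9$-congruent to $E$ is \emph{a fortiori} $3$-congruent to $E$, and the underlying elliptic curve depends only on its class in $X^{\pm}_E(3)$, the family over $X^{\pm}_E(9)$ is the pullback along the forgetful map $\pi\colon X^{\pm}_E(9)\to X^{\pm}_E(3)$ of the level-$3$ family of Theorem~\ref{MainThm3}; so it suffices to identify $\pi$. First I would record the elementary fact behind the formula: for $P\in X^{\pm}_E(9)$ both cubics vanish, so restricting the general member $\lambda_1 F^{\pm}_1+\lambda_2 F^{\pm}_2$ of the pencil $\HH=\PP(H^0(\PP^3,\mathcal I_{X^{\pm}_E(9)}(3)))$ to the tangent line $P+\lambda Q$ gives $(\lambda_1\gamma_1+\lambda_2\gamma_2)\lambda^2+(\lambda_1\delta_1+\lambda_2\delta_2)\lambda^3$. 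Hence there is a unique member of the pencil meeting the tangent line to order $3$ at $P$, sitting at $(\lambda_1:\lambda_2)=(\gamma_2:-\gamma_1)$, and $\psi\colon P\mapsto(\gamma_2:-\gamma_1)$ is a morphism $X^{\pm}_E(9)\to\HH\cong\PP^1$, manifestly defined over $K$.

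The heart of the argument is to show that $\psi$, under the correct identification of $\HH$ with $X^{\pm}_E(3)$, is the forgetful map $\pi$. By the Koszul resolution of a complete intersection of two cubics, $H^0(\PP^3,\mathcal I_{X^{\pm}_E(9)}(3))$ is two-dimensional, spanned by $F^{\pm}_1,F^{\pm}_2$; I would compute the Galois action it inherits from the construction of $X^{\pm}_E(9)$ and match it with the two-dimensional representation that underlies the parametrisation of $X^{\pm}_E(3)$ in Theorem~\ref{MainThm3}. The osculating-member construction is intrinsic, hence compatible with this identification, so $\psi$ becomes a $K$-morphism $X^{\pm}_E(9)\to X^{\pm}_E(3)$. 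Since both $\psi$ and $\pi$ are defined over $K$, it then suffices to prove $\psi=\pi$ after base change to $\Kbar$, where $X^{\pm}_E(9)\cong X(9)$, $X^{\pm}_E(3)\cong X(3)$, and $\pi$ becomes the standard forgetful cover of degree $[\Gamma(3):\Gamma(9)]=27$; this I would verify on the function field, or via the $q$-expansions of the modular forms realising the embedding $X(9)\subset\PP^3$, confirming that the osculating member records precisely the induced $E[3]$-structure.

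Finally, the explicit scalar in $(r:s)=(\gamma_2:3\gamma_1)$ records the linear identification $\HH\cong X^{\pm}_E(3)$ carrying $(\gamma_2:-\gamma_1)$ to $(\gamma_2:3\gamma_1)$, i.e. the factor $-3$ relating the two normalisations; this drops out of the same $q$-expansion (or function-field) comparison, and can be cross-checked against any of the explicit non-trivial $9$-congruences produced later in the paper. I expect the main obstacle to be the Galois-module identification of the pencil $\HH$ with the representation underlying $X^{\pm}_E(3)$: this is what distinguishes the forgetful map from an arbitrary morphism to $\PP^1$, and it is exactly where the modular meaning of the embedding $X^{\pm}_E(9)\subset\PP^3$ must be used. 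By comparison, pinning down the scalar $3$ and the sign is routine.
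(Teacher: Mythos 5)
Your first paragraph coincides with the paper's own starting point: restricting $\lambda_1F^\pm_1+\lambda_2F^\pm_2$ to the tangent line shows that the unique pencil member osculating to order $3$ at $P$ sits at $(\lambda_1:\lambda_2)=(\gamma_2:-\gamma_1)$, and the standard-model statement that this osculation map is the forgetful map $X(9)\to X(3)$, $t=\gamma_2/\gamma_1$, is exactly Lemma~\ref{lem:geom}, which the paper proves by an explicit polynomial identity ($\Lambda\,\HH(F_i)\,\Lambda\equiv\gamma_i D\cdot(\text{rank-one matrix})\bmod(F_1,F_2)$, with $\Lambda$ the matrix of $2\times2$ Jacobian minors, combined with the formula for $X(9)\to X(3)$ from Lemma~\ref{prop:X9}); your proposed function-field or $q$-expansion verification over $\Kbar$ would just replicate this. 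The genuine gap is in your ``heart of the argument''. The Galois-module identification of the pencil with ``the representation underlying $X^\pm_E(3)$'' is vacuous: the pencil $H^0(\PP^3,\mathcal I_{X^\pm_E(9)}(3))$ has the $K$-rational basis $F^\pm_1,F^\pm_2$, and $(r:s)$ is a $K$-rational coordinate on $X^\pm_E(3)\isom\PP^1$, so Galois acts trivially on both sides and equivariance is satisfied by \emph{every} element of $\PGL_2(K)$. It therefore cannot distinguish the forgetful map from its composition with an arbitrary $K$-rational M\"obius transformation, which is precisely the ambiguity you need to kill.

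Concretely, after your $\Kbar$ check on the standard model all you know is $(r:s)=M(\gamma_2:\gamma_1)$ for some $M\in\PGL_2$ defined over the base and a priori varying with $(a,b)$: the comparison on $X(9)$ in the coordinate $t$ never sees the two coordinate changes that constitute the twist --- the linear map relating $(F^\pm_1,F^\pm_2)$ to the standard pencil basis, and the identification of $(r:s)$ with $t$ --- so your claim that the normalisation $(\gamma_2:3\gamma_1)$ ``drops out of the same $q$-expansion comparison'' is exactly the unproved point (checking it on the later examples verifies, but does not prove, it). This is ``the one detail'' the paper flags, and it is closed there by a twist-invariant computation: the cusps of $X^\pm_E(3)$ are the roots of explicit quartics $f_\pm(r,s)$, obtained from the discriminant of the family~(\ref{W3}), and one has $\det\HH(3rF^\pm_1-sF^\pm_2)=f_\pm(r,s)\,D_\pm(x,y,z,t)$; comparing with~(\ref{hess-id}), which says that on the standard model the Hessian vanishes exactly for the four pencil members lying over the cusps of $X(3)$, pins the identification of the pencil with $X^\pm_E(3)$ down to $(r:s)=(\gamma_2:3\gamma_1)$, uniformly in $E$. (Note also that Theorem~\ref{MainThm9}, on which the statement rests, is itself only established for general $E$ by the corestriction descent of Lemmas~\ref{lem:cores} and~\ref{lem:coprime} from the case $E(K)[3]\neq0$, where the explicit coordinate changes to the model of Remark~\ref{rem:expand} are available.) To salvage your framework you must replace the Galois-module step by some intrinsic normalisation of this kind --- the Hessian/cusp matching, or an equivalent computation tracking the twisting data --- rather than expect the scalar $3$ to emerge from the untwisted comparison.
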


Let $\ZZ(n)$ be the modular diagonal quotient surface whose points
parametrise pairs of directly $n$-congruent elliptic curves, and
likewise $\ZZ^-(n)$ in the reverse case. It is shown in \cite[Theorem
4]{KS} that the surfaces $\ZZ^\pm(9)$ are each birational over $\C$ to
an elliptic surface. Using Theorem~\ref{MainThm9} we are able to
determine these elliptic surfaces explicitly.

\begin{Theorem}
\label{thm:ellsurf}
The surface $\ZZ(9)$ is birational over $\Q$ to the elliptic surface
\begin{align*}
  y^2 + (6 T^2 + 3 T + 2) x y + T^2 & (T + 1)(4 T^3 + 9 T + 9) y \\ &
  = x^3 - (16 T^4 + 12 T^3 + 9 T^2 + 6 T + 1) x^2.
\end{align*}
The surface $\ZZ^-(9)$ is birational over $\Q$ to the elliptic surface
\begin{align*}
  y^2 + (12 T^3 + 3 T^2 - 6) x y + (T & -1)^3 (T^3-1) (4 T^3 - 3 T -
  7) y \\ & = x^3 - 3 (T+1) (T^3-1) (9 T^2 + 2 T + 1) x^2.
\end{align*}
\end{Theorem}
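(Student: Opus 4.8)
The plan is to realise $\ZZ^\pm(9)$ explicitly as a twist quotient of the universal family of Theorem~\ref{MainThm9}, and then to make the elliptic fibration --- whose existence over $\C$ is supplied by \cite{KS} --- visible and defined over $\Q$. A point of $\ZZ^\pm(9)$ is the isomorphism class of a directly (resp.\ reverse) $9$-congruent pair $(E_1,E_2)$. Writing $E_1=E:y^2=x^3+ax+b$ and letting $E_2$ range over $Y^\pm_E(9)$, Theorem~\ref{MainThm9} identifies $E_2$ with a point $P=(x:y:z:t)$ on the complete intersection $X^\pm_E(9)=\{F^\pm_1=F^\pm_2=0\}\subset\PP^3$. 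Letting $(a,b)$ vary produces a threefold on which simultaneous quadratic twisting $(a,b)\mapsto(d^2a,d^3b)$ of the pair acts; one checks that this lifts to a $\mathbb{G}_m$-action on $\PP^3$ with weights $(3,2,1,0)$ on $(x,y,z,t)$ in the direct case and $(2,1,1,0)$ in the reverse case, under which the $F^\pm_i$ are quasi-homogeneous. The surface $\ZZ^\pm(9)$ is the quotient, and I would describe its function field over $\Q$ using twist-invariant rational functions, combining the modulus $a^3/b^2$ of $E$ with degree-zero ratios of $x,y,z,t$.

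To find the fibration I would exploit the forgetful map of Theorem~\ref{thm:geom}, $P\mapsto(r:s)=(\gamma_2:3\gamma_1)\in X^\pm_E(3)\isom\PP^1$, which induces a finite map $\ZZ^\pm(9)\to\ZZ^\pm(3)$. Since $X^\pm_E(3)\isom\PP^1$, the surface $\ZZ^\pm(3)$ is rational, and I would parametrise it explicitly from Theorem~\ref{MainThm3} by pairing the modulus of $E$ with the forgetful coordinate $(r:s)$. I expect the parameter $T$ of the statement to be, after a coordinate change, a twist-invariant function on $\ZZ^\pm(3)$ chosen so that its pullback along the branched cover $\ZZ^\pm(9)\to\ZZ^\pm(3)$ has genus-one fibres; \cite{KS} guarantees that such a pencil exists.

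With $T$ fixed, the generic fibre is a genus-one curve, and to reduce it to Weierstrass form I would first exhibit a rational point over $\Q(T)$ to act as the zero section: in the direct case one is furnished by the trivial congruences $E_2=E_1$, while in the reverse case, where no such self-congruence exists generically, I would use a cusp instead. Translating the chosen point to the origin, running the standard reduction to a long Weierstrass equation, clearing the twist and normalising should then reproduce the stated coefficients --- the shape $a_4=a_6=0$, with the visible point $(0,0)$, recording a second rational section. Doing this for $F^+$ gives the first surface and for $F^-$ the second, and a comparison of function fields shows the maps are birational over $\Q$.

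The main obstacle is this last reduction: descending from the degree-$(3,3)$, genus-$10$ fibres to the clean Weierstrass models. The difficulty is threefold: singling out the correct genus-one pencil, equivalently the correct coordinate $T$, among the many pencils available; pinning down a section defined over $\Q$, which is precisely what upgrades the \cite{KS} result from $\C$ to $\Q$; and carrying out the birational reduction while controlling the discriminant so that no spurious factors are introduced. By comparison, the twist bookkeeping of the first step and the explicit parametrisation of $\ZZ^\pm(3)$ are routine.
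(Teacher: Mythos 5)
Your outline reproduces the paper's skeleton faithfully up to a point: the paper likewise treats $a,b$ as variables so that Theorem~\ref{MainThm9} defines a threefold, quotients by the ${\mathbb G}_m$-action of Remark~\ref{rem:scale} (your weights $(3,2,1,0)$ and $(2,1,1,0)$ are the right ones), and even your cusp idea for the reverse case matches: the paper's rational point there is $(u:v:w:s)=(2:1:1:0)$, which lies at $s=0$ where $(a,b)=(-3w^2,2w^3)$, i.e.\ $4a^3+27b^2=0$. The genuine gap is in the step you yourself flag as the main obstacle, and your proposed remedy for it would fail. You expect the fibration parameter $T$ to be a pullback of a function on $\ZZ^{\pm}(3)$ along the forgetful map. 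For the fibration that actually yields the stated Weierstrass equations this is provably not so: unwinding the paper's substitutions, its parameter satisfies $T = 6(yt+z^2)/(at^2+3z^2)$ on the threefold, so its restriction to a fixed curve $X_E(9)\subset\PP^3$ (degree $9$) is given by a pencil of quadrics and has degree at most $18$; it is non-constant there (otherwise the fibres of the pencil would contain the genus-$10$ curves $X_E(9)$), whereas any non-constant function factoring through the degree-$27$ forgetful map $X_E(9)\to X_E(3)$ has degree a multiple of $27$. Nor can \cite{KS} fill the hole: it gives birationality to an elliptic surface over $\C$ only, says nothing about a $\Q$-rational pencil, and gives no reason the pencil should have your pullback form --- producing an explicit fibration over $\Q$ is precisely the content of the theorem being proved, so it cannot be invoked to make the key choice.

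What the paper does instead is an explicit ansatz that performs the ${\mathbb G}_m$-quotient and collapses the genus-$10$ fibres in a single stroke. In the direct case it sets $a = 12s - 3w^2$, $b = us + 2w^3$ and $(x,y,z,t) = (6vs - 3w^3,\ 2sT - w^2,\ w,\ 1)$, whereupon $F_1^+ = 12s^2(g_0 + 4sg_1)$ and $F_2^+ = 36s^2(h_0 + 4sh_1)$ for explicit forms $g_i,h_i$ in $u,v,w$ over $\Q(T)$; eliminating $s$ leaves the plane cubic $g_0h_1 - g_1h_0 = 0$, smooth of genus one with the visible rational point $(u:v:w) = (12:T-6:-1)$, and standard Weierstrass reduction (Cassels, Chapter~8) together with $T \leftarrow 2T+3$ gives the first stated surface. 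The reverse case is parallel, with $a = 3us - 3w^2$, $b = 3vs^2 - 3uws + 2w^3$, yielding a quadric intersection $\{q_1=q_2=0\}\subset\PP^3$ with the point $(2:1:1:0)$ and the substitution $T \leftarrow (2T-3)/(2T+1)$. Thus both the pencil and the section are exhibited over $\Q$ by construction; neither the diagonal $E_2=E_1$ nor the surface $\ZZ^{\pm}(3)$ plays any role, and Theorem~\ref{thm:geom} is not needed here. Without an explicit device of this kind --- some substitution or other mechanism that actually produces a $\Q(T)$-pencil of genus-one curves on $\ZZ^{\pm}(9)$ --- your argument stops exactly where you say it does, and the specific equations in the statement cannot be reached.
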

We use these results to prove \cite[Conjecture 5]{KS} in the case
$n=9$.

\begin{Theorem}
\label{thm:inf}
There are infinitely many non-trivial pairs of directly $9$-congruent
elliptic curves over $\Q$, and these have infinitely many distinct pairs of
$j$-invariants.  By ``non-trivial'' we mean that the elliptic curves
are not isogenous. The same result holds in the reverse case.
\end{Theorem}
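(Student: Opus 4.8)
The plan is to leverage the explicit elliptic surfaces furnished by Theorem~\ref{thm:ellsurf}. Since $\ZZ(9)$ and $\ZZ^-(9)$ are birational over $\Q$ to elliptic surfaces, call them $\mathcal{E}^{\pm} \to \PP^1_T$, and since a $\Q$-rational point on $\ZZ^{\pm}(9)$ away from the cusps and the degenerate fibres corresponds to a pair of directly, respectively reverse, $9$-congruent elliptic curves over $\Q$, it suffices to produce infinitely many $\Q$-points on each surface yielding non-isogenous pairs with infinitely many distinct pairs of $j$-invariants. First I would regard $\mathcal{E}^{\pm}$ as an elliptic curve over the function field $\Q(T)$ and show that its Mordell--Weil rank is positive, this being exactly the input that proves \cite{KS} Conjecture~5 for $n=9$.

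To establish positive rank I would search, using the explicit Weierstrass models of Theorem~\ref{thm:ellsurf}, for a section $P \in \mathcal{E}^{\pm}(\Q(T))$ given by coordinates rational in $T$, and then verify that $P$ has infinite order. The cleanest verification is by specialisation: a torsion section specialises to a torsion point at every fibre of good reduction, so it is enough to exhibit a single $t_0 \in \Q$ for which the specialised point $P(t_0) \in \mathcal{E}^{\pm}_{t_0}(\Q)$ has infinite order, for instance by checking it is not among the finitely many torsion points via a height or Nagell--Lutz estimate. This forces $P$ itself to be non-torsion, hence the rank is at least $1$.

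Granting a section $P$ of infinite order, the specialisations $P(t)$ for $t \in \Q$ give infinitely many $\Q$-points lying in distinct fibres, hence infinitely many pairs of $9$-congruent elliptic curves over $\Q$. To extract the refinement on $j$-invariants I would transport the $j$-invariant $j_1$ of the first curve of the pair through the birational map of Theorem~\ref{thm:ellsurf}, obtaining a rational function $j_1(T)$ along the section, and likewise $j_2(T)$. Non-constancy of $j_1(T)$, checked by evaluating at two values of $T$, shows that $j_1$ takes infinitely many values along the family, so the pairs $(j_1,j_2)$ are infinitely many and in particular the pairs of curves are genuinely distinct. The whole argument then runs verbatim for $\ZZ^-(9)$.

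The main obstacle is the non-triviality requirement, together with the need to rule out that our infinitely many points collapse onto isogenous pairs. Here I would use the fact that, over $\Q$, the degrees of cyclic isogenies are bounded to a finite list by the theorems of Mazur and Kenku; consequently a pair of isogenous elliptic curves over $\Q$ has $j$-invariants satisfying $\Phi_N(j_1,j_2)=0$ for $N$ in a fixed finite set $S$, where $\Phi_N$ is the classical modular polynomial. For each $N \in S$ the pullback $\Phi_N\bigl(j_1(T),j_2(T)\bigr)$ is a rational function of $T$; provided it is not identically zero, which I would confirm by exhibiting one fibre where the specialised pair is verifiably non-isogenous, it has only finitely many roots. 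Taking the union over the finite set $S$, only finitely many specialisations $P(t)$ can give isogenous pairs, so all but finitely many yield non-trivial pairs, completing the proof in both cases.
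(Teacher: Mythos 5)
Your proposal is sound in outline and shares the paper's overall skeleton --- the elliptic surfaces of Theorem~\ref{thm:ellsurf}, a section of infinite order (the paper exhibits this explicitly: $(x,y)=(0,0)$ on each surface, so no search is needed), specialisation of $T$, and the Mazur--Kenku bound \cite{M}, \cite{K} on degrees of cyclic isogenies over $\Q$ --- but you handle the crucial non-triviality step by a genuinely different mechanism. You fix a single section, pull back the two $j$-invariants to rational functions $j_1(T)$, $j_2(T)$, and rule out isogeny generically by showing $\Phi_N\bigl(j_1(T),j_2(T)\bigr)\not\equiv 0$ for each $N$ in the Mazur--Kenku list, certified at one fibre. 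The paper instead argues geometrically and uniformly in $m$: for each $m$ the locus on $\ZZ^{\pm}(9)$ of $m$-isogenous pairs consists of finitely many curves, each with a non-constant morphism to $X_0(m)$ and hence of positive genus for $m$ large, while the multiples $nP$ of the section give infinitely many curves birational to $\PP^1$ over $\Q$; so all but finitely many of these rational curves avoid every isogeny locus. What the paper's argument buys is precisely the fallback your version quietly presupposes: as written, your proof succeeds only if the particular section you chose does not itself lie on an isogeny locus, and if your one-fibre check failed you would have to pass to a multiple $nP$ --- it is the paper's genus argument that guarantees a priori that almost every multiple works, with no computation and no recourse to the enormous polynomials $\Phi_N$ for $N$ up to $163$. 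Conversely, when your check succeeds, your route is more explicit and effectively bounds the set of bad specialisations. Two details to repair in your write-up: to certify $\Phi_N\bigl(j_1(t_0),j_2(t_0)\bigr)\neq 0$ you must evaluate $\Phi_N$ directly (or verify non-isogeny over $\Kbar$), since checking that the specialised pair is non-isogenous over $\Q$, e.g.\ by traces of Frobenius, does not exclude a geometric isogeny between quadratic twists, which would still force $\Phi_N$ to vanish there; and if $j_1(T)$ happened to be constant along the section you should check $j_2(T)$ instead --- non-constancy of the pair $\bigl(j_1(T),j_2(T)\bigr)$ is in any case automatic for a non-constant curve on $\ZZ^{\pm}(9)$, since the modular diagonal quotient surface maps finitely to the product of two copies of the $j$-line.
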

\begin{proof}
  For each positive integer $m$ there are finitely many curves on
  $\ZZ^\pm(9)$ corresponding to pairs of elliptic curves related by a
  cyclic isogeny of degree $m$. Each of these curves comes with a
  non-constant morphism to $X_0(m)$ and so has positive genus for $m$
  sufficiently large. The elliptic surfaces in
  Theorem~\ref{thm:ellsurf} each have a $\Q$-rational section of
  infinite order given by $(x,y) = (0,0)$.  Therefore the surfaces
  $\ZZ^\pm(9)$ each contain infinitely many curves birational to
  $\PP^1$ over $\Q$.  Since these curves have genus~$0$, we know by
  the above remarks that only finitely many correspond to pairs of
  isogenous elliptic curves.  It follows that there are non-trivial
  pairs of $9$-congruent elliptic curves over $\Q(T)$.
  In Section~\ref{sec:ex} we give a more explicit version of this first
  part of the proof by using Theorem~\ref{MainThm9} to exhibit some
  non-trivial pairs of $9$-congruent elliptic curves over $\Q(T)$.

  The proof is completed by specialising $T$ to a rational number.  In
  this way we obtain infinitely many pairs of elliptic curves that are
  not $m$-isogenous for any $m \le d$ (for any fixed $d$).  We are
  done by the theorem of Mazur \cite{M}, extended to composite $m$ by
  Kenku \cite{K}, that any cyclic isogeny defined over $\Q$ has degree
  $m \le 163$.
\end{proof}

\section{The modular curves $X(3)$ and $X(9)$}
\label{sec:X3andX9}

Let $M = \mu_n \times \Z/n\Z$ equipped with the pairing
\begin{equation*}
  \langle (\zeta,a) , (\xi,b) \rangle = \zeta^{b} \xi^{-a}. 
\end{equation*}
The modular curve $Y(n) = X(n) \setminus \{ \text{cusps} \}$
parametrises pairs $(E,\phi)$ where $E$ is an elliptic curve and $\phi
: E[n] \to M$ is a symplectic isomorphism, i.e. an isomorphism that
matches up the Weil pairing on $E[n]$ with the pairing
$\langle~,~\rangle$ on $M$.  We identify $\SL_2(\Z/n\Z)$ with the
group of symplectic automorphisms of $M$. There is then a natural
action of $\SL_2(\Z/n\Z)/ \{\pm I\}$ on $X(n)$ with quotient the
$j$-line.

In the case $n=3$ it is well known that $Y(3) = \Aff^1 \setminus \{
t^3 = 1\}$ parametrises the non-singular fibres in the Hesse pencil of
plane cubics:
\begin{equation}
\label{HessePencil}
\{ x^3+y^3+z^3 - 3 t x y z  = 0 \} \subset \PP^2.
\end{equation}
We need an analogous result in the case $n=9$.

\begin{Lemma}
\label{prop:X9}
The modular curve $X(9)$ has equations
\begin{equation}
\label{eqn:X9}
X(9) = \left\{ 
\begin{aligned}
a^2 b +b^2 c +c^2 a & = 0 \\
a b^2 + b c^2 + c a^2 & = d^3 
\end{aligned} \right\} \subset \PP^3. 
\end{equation}
Moreover the forgetful map $X(9) \to X(3)$ is given by
\begin{equation}
\label{eqn:forget}
t = -(a^3 + b^3 + c^3 + 6 abc)/(3 d^3).
\end{equation}
\end{Lemma}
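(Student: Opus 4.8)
The plan is to construct the modular curve $X(9)$ explicitly by exploiting its $3$-isogeny to $X(3)$, which we already understand via the Hesse pencil. The key observation is that the covering map $X(9) \to X(3)$ is Galois with group $\mathrm{SL}_2(\Z/9\Z)/\mathrm{SL}_2(\Z/3\Z)$-type structure, and more usefully that $X(9)$ carries a natural action of a Heisenberg-type group $H$ sitting inside $\mathrm{SL}_2(\Z/9\Z)/\{\pm I\}$ that acts on the ambient $\PP^3$. First I would identify the correct embedding $X(9) \hookrightarrow \PP^3$ by realizing the $9$-torsion structure: the universal elliptic curve over $Y(9)$ can be embedded via a complete linear system, and the two cubics $F_1, F_2$ (here written $a^2b + b^2c + c^2a$ and $ab^2 + bc^2 + ca^2 - d^3$) should be the unique (up to the group action) $\mathrm{SL}_2$-invariant or semi-invariant cubics in the appropriate representation. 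Concretely, one expects the coordinates $a,b,c,d$ to transform under a level-$9$ Heisenberg group, and the two defining cubics to be the lowest-degree invariants.

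The main steps I would carry out are as follows. First, establish that the proposed variety $C = \{a^2b+b^2c+c^2a=0,\ ab^2+bc^2+ca^2=d^3\}$ is a smooth complete intersection of genus $10$ (the genus of a $(3,3)$ complete intersection in $\PP^3$ is $\tfrac{1}{2}\cdot 3\cdot 3\cdot(3+3-4)+1 = 10$), matching the known genus of $X(9)$. Second, verify that $C$ admits the correct automorphism group action compatible with the $\mathrm{SL}_2(\Z/9\Z)/\{\pm I\}$-action on $X(9)$; the cyclic permutation $a\mapsto b\mapsto c\mapsto a$ together with a diagonal $\mu_3$-action $(a,b,c)\mapsto(\zeta_3 a, \zeta_3^2 b, c)$ should generate the expected symmetries, and these preserve both defining cubics up to scalar. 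Third, construct the forgetful map to $X(3) = \{t\}$ and check it realizes the Hesse-pencil parameter via formula~(\ref{eqn:forget}). The cleanest way to pin down the isomorphism $C \isom X(9)$ is to match the quotient $C/H$ with the $j$-line through $X(3)$ and confirm the degree of the covering $C \to X(3)$ equals $|\mathrm{SL}_2(\Z/9\Z)|/|\mathrm{SL}_2(\Z/3\Z)| = 27$ (after accounting for $\pm I$), so that $t$ is determined as a degree-$27$ rational function on $C$ invariant under the relevant subgroup.

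To derive the explicit formula $t = -(a^3+b^3+c^3+6abc)/(3d^3)$, I would compute the map as follows: the numerator $a^3+b^3+c^3+6abc$ and the denominator $d^3$ are both natural semi-invariants under the Heisenberg action, transforming by the same character, so their ratio descends to a function on the quotient. I would confirm it equals the Hesse parameter $t$ by a direct comparison on the universal curve — pulling back the Hesse cubic $x^3+y^3+z^3-3txyz$ and matching coefficients against the coordinate ring relations of $C$. One consistency check is that on $C$ we have $a^2b+b^2c+c^2a = 0$ and $ab^2+bc^2+ca^2 = d^3$, so any symmetric combination can be reduced modulo these relations; in particular $(a+b+c)^3$ expands using the elementary symmetric functions $e_1, e_2, e_3$, where $e_2 = 0$ is forced along one stratum, simplifying the expression considerably.

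The hard part will be establishing that the complete intersection $C$ is precisely isomorphic to $X(9)$ as a modular curve, rather than merely a curve of the right genus with the right symmetry group. The genus and automorphism-group data narrow the candidates but do not immediately identify $C$ with $X(9)$; I expect the decisive argument to come from the explicit compatibility of the forgetful map with the known $X(3)$ structure, combined with a ramification calculation at the cusps. Specifically, I would verify that the fibers of $t$ over the cuspidal values $t^3 = 1$ of $X(3)$ degenerate on $C$ exactly as the cusps of $X(9)$ require, and that the monodromy of the covering $C \to \PP^1_j$ matches the modular interpretation. This ramification-and-cusp bookkeeping — confirming that the rational function in~(\ref{eqn:forget}) has the correct divisor of poles and zeros and induces the expected branched cover — is where the genuine content lies; the algebraic verification that the two cubics vanish and that the formula for $t$ is $\mathrm{SL}_2$-equivariant is comparatively mechanical.
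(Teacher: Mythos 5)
There is a genuine gap here, and it is the one you half-acknowledge yourself: nothing in your proposal actually proves that the complete intersection $C = \{a^2b+b^2c+c^2a = 0,\ ab^2+bc^2+ca^2 = d^3\}$ \emph{is} $X(9)$. The genus computation and the partial symmetries you exhibit (the $3$-cycle and diagonal scalings --- note that your diagonal generator $(a,b,c)\mapsto(\zeta_3 a,\zeta_3^2 b,c)$ preserves the second cubic only after also rescaling $d$ by a primitive ninth root of unity, compare $\rhobar(T)$ in Remark~\ref{rem:act}) generate only a small subgroup of the order-$324$ group $\SL_2(\Z/9\Z)/\{\pm I\}$ that must act; a rigidity or monodromy identification of the kind you sketch would at minimum require verifying invariance of $C$ under the non-diagonal generator $\rhobar(S)$ and then carrying out an actual ramification-and-monodromy argument, none of which you do. (Two subsidiary misconceptions: the level-$9$ Heisenberg group acts on $\PP^8$, not on this $\PP^3$ --- what acts here is the projective Weil representation of $\SL_2(\Z/9\Z)$ --- and the individual cubics $F_1,F_2$ are not invariants of the full group, which instead permutes the pencil they span.) The paper avoids your ``hard part'' entirely by invoking a prior explicit containment: by \cite[Section 2]{7and11}, following Klein and V\'elu, for odd $n\ge 5$ one has $Y(n)\subset Z(n)\subset\PP^{n-1}$, where $Z(n)$ is cut out by $a_0=0$, $a_{n-i}=-a_i$ and $\rank(a_{i-j}a_{i+j})\le 2$; for $n=9$ the linear conditions land in a $\PP^3$ with coordinates $(a:b:c:d)$, and the resulting $5\times 5$ rank condition decomposes $Z(9)$ as the union of the curve~(\ref{eqn:X9}) and four isolated points, so $X(9)$, being a curve, must equal the curve component. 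That containment in an explicitly computable determinantal variety is the key input your plan lacks, and without a substitute for it ``right genus plus right symmetries'' does not identify the modular structure.

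Your treatment of the forgetful map has the same defect. Semi-invariance of $a^3+b^3+c^3+6abc$ and $d^3$ shows only that their ratio descends through the quotient by $\ker\big(\SL_2(\Z/9\Z)\to\SL_2(\Z/3\Z)\big)$; it neither identifies this ratio with the Hesse parameter $t$ nor fixes the normalizing constant $-1/3$. Your plan to ``pull back the Hesse cubic'' presupposes an explicit universal elliptic curve over the model, which you never construct; the paper obtains one from the pfaffian presentation of \cite{pfaff} (the condition $\rank(a_{i-j}x_{i+j})\le 2$, giving $27$ quadrics in $\PP^8$), observes that $(x_0:\cdots:x_8)\mapsto(x_0x_3x_6:x_1x_4x_7:x_2x_5x_8)$ quotients by the translation action of $E[3]$ and hence may be identified with multiplication by $3$, and then checks by direct calculation that the image is the Hesse cubic with $t$ as in~(\ref{eqn:forget}). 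Finally, your proposed consistency check is incorrect: $e_2=0$ is not forced on any stratum of $C$; adding the two defining cubics gives $e_1e_2-3e_3=d^3$, and that is all the symmetric-function information the relations provide.
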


\begin{proof}
  In \cite[Section 2]{7and11} we showed, following work of Klein,
  V\'elu and others, that if $n \ge 5$ is an odd integer then $Y(n)
  \subset Z(n)$ where $Z(n) \subset \PP^{n-1}$ is defined as
  follows. We take co-ordinates $(a_0:a_1: \ldots: a_{n-1})$ on
  $\PP^{n-1}$ and agree to read all subscripts mod $n$. Then $Z(n)$ is
  defined by $a_0=0$, $a_{n-i}=-a_i$ and
\begin{equation*}
 \rank ( a_{i-j} a_{i+j} )_{i,j=0}^{n-1} \le 2.
\end{equation*}

In the case $n=9$ we put
\begin{equation}
\label{zero}
(a_0:a_1: \ldots: a_8) = (0: a: -b: d : c : -c : -d : b : -a). 
\end{equation}
 Then $Z(9) \subset \PP^3$ is defined by 
\[ \rank \begin{pmatrix}
  0   & -a^2 & -b^2 & -d^2 &-c^2 \\
  a^2 &    0 &  -ad &   bc &  cd \\
  b^2 &   ad &    0 &   ac & -bd \\
  d^2 &  -bc &  -ac &    0 & -ab \\
  c^2 &  -cd &   bd &   ab &   0 \\
\end{pmatrix} \le 2, \]
equivalently
\begin{align*}
(a^2 b+b^2 c+c^2 a) d & = 0 & b c^3 - b a^3 - c d^3 & = 0 \\
a b^3 - a c^3 - b d^3 & = 0 & c a^3 - c b^3 - a d^3 & = 0.
\end{align*}
Adding together the last three equations and factoring shows that
$Z(9)$ is the union of the curve defined in~(\ref{eqn:X9}) and four
isolated points
\[ (0:0:0:1), \, \, (1:1:1:0), \, \, (1:\zeta_3:\zeta_3^2:0), \, \,
(1:\zeta_3^2:\zeta_3:0). \] Since $X(9)$ is a curve, it must therefore
be as defined in~(\ref{eqn:X9}).

Now writing $(x_0: x_1 : \ldots : x_{n-1})$ for our co-ordinates on
$\PP^{n-1}$, and again agreeing to read all subscripts mod $n$, it is
shown in \cite[Section 3]{pfaff} that the family of elliptic curves
parametrised by $Y(n)$ is defined by
\[ \rank ( a_{i-j} x_{i+j} )_{i,j=0}^{n-1} \le 2. \] 
In the case $n=9$ the elliptic curve $E$ corresponding to $(a:b:c:d)
\in Y(9)$ is the curve of degree $9$ in $\PP^8$ defined by the
equations
\begin{align*}
    a d x_0^2 - b^2 x_1 x_8 + a^2 x_2 x_7 &= 0 \\
    b c x_0^2 + d^2 x_1 x_8 - a^2 x_3 x_6 &= 0 \\
    c d x_0^2 + c^2 x_1 x_8 - a^2 x_4 x_5 &= 0
\end{align*}
and their cyclic permutes. Thus $E$ is defined by a total of 27
quadrics.

Let $0_E$ be the point on $E$ given by~(\ref{zero}).  In principle we
could complete the proof by putting the elliptic curve $(E,0_E)$ in
Weierstrass form.  However it is simpler to argue as follows. The
action of $E[9]$ on $E$ by translation is generated by the maps $x_i
\mapsto x_{i+1}$ and $x_i \mapsto \zeta_9^i x_i$.  From this we see
that the morphism
\[ (x_0: x_1: \ldots :x_8) \mapsto (x:y:z) = (x_0 x_3 x_6 : x_1 x_4
x_7 : x_2 x_5 x_8) \] 
quotients out by the action of $E[3]$. We may
therefore identify this morphism with the multiplication-by-$3$ map on
$E$.  On the other hand, we find by direct calculation that the image
takes the form~(\ref{HessePencil}) with $t$ as specified
in~(\ref{eqn:forget}). The forgetful map $X(9) \to X(3)$ is therefore
as claimed.
\end{proof}

\begin{Remark} 
\label{rem:act}
The action of $\SL_2(\Z/9\Z)$ on $X(9) \subset \PP^3$ is described by
a projective representation $\rhobar : \SL_2(\Z/9\Z) \to
\PGL_4(\Kbar)$.  According to \cite[Section 2]{7and11} it is given on
the generators $S = (\begin{smallmatrix} 0 & 1 \\ -1 &
  0 \end{smallmatrix})$ and $T = (\begin{smallmatrix} 1 & 1 \\ 0 &
  1 \end{smallmatrix})$ for $\SL_2(\Z/9\Z)$ by
\[ \rhobar(S) = \begin{pmatrix}
  \zz_9     - \zz_9^{8} & \zz_9^{7} - \zz_9^{2} &  \zz_9^{4} - \zz_9^{5} &  \zz_9^{3} - \zz_9^{6} \\
  \zz_9^{7} - \zz_9^{2} & \zz_9^{4} - \zz_9^{5} &  \zz_9 - \zz_9^{8} &  \zz_9^{3} - \zz_9^{6} \\
  \zz_9^{4} - \zz_9^{5} & \zz_9 - \zz_9^{8} & \zz_9^{7} - \zz_9^{2} &  \zz_9^{3} - \zz_9^{6} \\
  \zz_9^{3} - \zz_9^{6} & \zz_9^{3} - \zz_9^6 & \zz_9^{3} - \zz_9^{6}
  & 0
\end{pmatrix} 
 \text{ and } \,\,
\rhobar(T) = \begin{pmatrix}
\zz_9 & 0 & 0 & 0 \\
0 & \zz_9^4 & 0 & 0 \\
0 & 0 & \zz_9^7 & 0 \\
0 & 0 & 0 & \zz_9^6 
\end{pmatrix}. \]
In particular, the action of 
$\ker \big(\SL_2(\Z/9\Z) \to \SL_2(\Z/3\Z)\big) \isom (\Z/3\Z)^3$ 
is generated by
\begin{align*}
(a:b:c:d) &\mapsto ( a : b : c : \zz_3 d ) \\
(a:b:c:d) &\mapsto ( b : c : a :  d ) \\
(a:b:c:d) &\mapsto ( \zz_3 a + b + c : a + \zz_3 b + c :
    a + b + \zz_3 c : (\zz_3 - 1) d ).
\end{align*}
It may be checked that the map $X(9) \to X(3)$ in Lemma~\ref{prop:X9}
quotients out by this action.
\end{Remark}

The pencil of cubics defining $X(9) \subset \PP^3$ is naturally a copy
of $X(3) \isom \PP^1$. To explain this, let $F_1 = a^2b + b^2 c + c^2
a$ and $F_2 = a b^2 + b c^2 + c a^2 - d^3$ be the cubics defining
$X(9)$. We write $\HH(F)$ for the Hessian matrix of a form $F$, that
is, the matrix of second partial derivatives. Then
\begin{equation}
\label{hess-id}
 \det \HH(t F_1 - F_2) 
    = -48 (t^3 - 1) (a^3 + b^3 + c^3 - 3 a b c) d. 
\end{equation}
Therefore the Hessian vanishes for just $4$ cubics in the pencil, and
these correspond to the cusps of $X(3)$. The forgetful map $X(9) \to
X(3)$ has the following geometric description.

\begin{Lemma}
\label{lem:geom}
For $P \in X(9)$ with tangent line $P + \lambda Q$ we write $F_i(P+
\lambda Q) = \gamma_i \lambda^2 + \delta_i \lambda^3$ for
$i=1,2$. Then the forgetful map $X(9) \to X(3)$ is given by $t =
\gamma_2/\gamma_1$.
\end{Lemma}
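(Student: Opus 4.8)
The plan is to make the expansion explicit. Since $F_i$ is a cubic and $P\in X(9)$, Taylor expansion gives $F_i(P+\la Q)=\la\,(\nabla F_i(P)\cdot Q)+\tfrac12\la^2\,Q^{\mathrm T}\HH(F_i)(P)Q+\la^3 F_i(Q)$; because $P+\la Q$ is the tangent line we have $\nabla F_i(P)\cdot Q=0$ for $i=1,2$, so $\gamma_i=\tfrac12 Q^{\mathrm T}\HH(F_i)(P)Q$ and $\delta_i=F_i(Q)$. First I would record that $\gamma_i$ is well defined on $X(9)$: the direction $Q$ is only determined modulo $P$ and up to scale, but Euler's identity for a cubic gives $\HH(F_i)(P)\,P=2\nabla F_i(P)$, whence $P^{\mathrm T}\HH(F_i)(P)Q=2\nabla F_i(P)\cdot Q=0$ and $P^{\mathrm T}\HH(F_i)(P)P=6F_i(P)=0$; thus $\gamma_i$ is unchanged under $Q\mapsto Q+\mu P$ and the ratio $\gamma_2/\gamma_1$ is a genuine function on $X(9)$. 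The statement can now be reformulated: the restriction of the pencil member $tF_1-F_2$ to the tangent line has $\la^2$-coefficient $t\gamma_1-\gamma_2$, so $t=\gamma_2/\gamma_1$ is precisely the value for which $tF_1-F_2$ meets the tangent line to $X(9)$ at $P$ to order $\ge 3$.

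Given this, the main line of attack is a direct computation matched against~(\ref{eqn:forget}). I would compute the gradients $\nabla F_1(P)=(2ab+c^2,\,a^2+2bc,\,b^2+2ca,\,0)$ and $\nabla F_2(P)=(b^2+2ca,\,2ab+c^2,\,a^2+2bc,\,-3d^2)$, exploiting the cyclic symmetry $\partial_aF_1=\partial_bF_2$, $\partial_bF_1=\partial_cF_2$, $\partial_cF_1=\partial_aF_2$. Solving the two tangency equations $\nabla F_i(P)\cdot Q=0$ for a representative $Q$ (the first equation constrains the $(a,b,c)$-coordinates, the second then fixes the $d$-coordinate since $d\neq0$ off the cusps), I would substitute $Q$ into the two quadratic forms $\gamma_1=\tfrac12 Q^{\mathrm T}\HH(F_1)(P)Q$ and $\gamma_2=\tfrac12 Q^{\mathrm T}\HH(F_2)(P)Q$ read off from the Hessians of $F_1$ and $F_2$. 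It then remains to verify the single identity $3d^3\gamma_2+(a^3+b^3+c^3+6abc)\gamma_1\equiv 0$ modulo the ideal $(a^2b+b^2c+c^2a,\ ab^2+bc^2+ca^2-d^3)$ defining $X(9)$, which is exactly the content of~(\ref{eqn:forget}).

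The conceptual reason, which also serves as a check, is that the assignment $P\mapsto(\gamma_2:\gamma_1)$ is intrinsic: under the linear action $\rhobar$ of Remark~\ref{rem:act}, tangent lines and Hessian contact transform naturally and the pencil $\langle F_1,F_2\rangle$ is permuted exactly by the $X(3)$-action, so the contact map is $\SL_2(\Z/9\Z)$-equivariant and, the level-$3$ kernel acting trivially on the pencil, factors through the forgetful map $X(9)\to X(3)$. Both the contact map and~(\ref{eqn:forget}) are $\PSL_2(\Z/3\Z)$-equivariant maps to $\PP^1\isom X(3)$, and by the Hessian identity~(\ref{hess-id}) they send the four cusps of $X(3)$ to the four points $t\in\{1,\zz_3,\zz_3^2,\infty\}$ at which $tF_1-F_2$ is singular; since an automorphism of $\PP^1$ fixing four points is the identity, the two maps are identified. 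I expect the main obstacle to lie in the direct route: because $Q$ is only defined modulo $P$ and up to scale, the intermediate expressions for $\gamma_1,\gamma_2$ are not canonical, so the verification of $3d^3\gamma_2+(a^3+b^3+c^3+6abc)\gamma_1\equiv0$ must be carried out in the coordinate ring of $X(9)$—a reduction modulo a non-principal ideal that is most safely confirmed by computer algebra. In the conceptual route the delicate point is instead pinning down the $\PP^1$-coordinate exactly, for which~(\ref{hess-id}) supplies precisely the cusp data needed.
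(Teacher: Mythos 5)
Your main route is correct and is essentially the paper's proof: identify $\gamma_i=\tfrac12 Q^{\mathrm T}\HH(F_i)(P)\,Q$ from the exact cubic Taylor expansion, note well-definedness via Euler's identity, and verify $3d^3\gamma_2+(a^3+b^3+c^3+6abc)\gamma_1\equiv 0$ modulo $(F_1,F_2)$, which combined with~(\ref{eqn:forget}) of Lemma~\ref{prop:X9} gives $t=\gamma_2/\gamma_1$. The one real difference is how the obstacle you flag (non-canonicity of $Q$) is handled: instead of solving the tangency equations for a representative $Q$ and reducing non-canonical expressions in the coordinate ring, the paper sandwiches each Hessian between two copies of the alternating matrix $\Lambda$ built from the $2\times 2$ minors of the pair of gradients, whose rows span the tangent line at $P$; this yields the canonical congruence $\Lambda\,\HH(F_i)\,\Lambda\equiv\gamma_i\,D\,PP^{\mathrm T}\bmod(F_1,F_2)$ with $D=(a^3+b^3+c^3-3abc)d$, and the closed forms $\gamma_1=-18d^3$, $\gamma_2=6(a^3+b^3+c^3+6abc)$ drop out with no choice of $Q$ made. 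This also settles a point your identity alone does not: it shows $\gamma_1\not\equiv 0$ on $X(9)$, ruling out the degenerate possibility $\gamma_1\equiv\gamma_2\equiv 0$, under which your congruence would hold vacuously; in your version this must be observed from the explicit computation. Finally, a caution on your conceptual check: showing that both maps send the cusps into the set $\{1,\zz_3,\zz_3^2,\infty\}$ only proves the discrepancy automorphism of $\PP^1$ preserves this four-point set, not that it fixes four points, and since this quadruple is equianharmonic its setwise stabiliser in $\PGL_2(\C)$ is a full $A_4$, so that step as stated does not close the argument; it can be rescued by equivariance alone, since the centraliser in $\PGL_2(\C)$ of the irreducible $A_4$-action is trivial, but then you must check that the pencil action and the modular action on $X(3)$ agree as homomorphisms $\PSL_2(\Z/3\Z)\to\PGL_2$, not merely that they have the same image.
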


\begin{proof}
  We temporarily write $a_1, a_2, a_3, a_4$ for $a,b,c,d$ and let
  $\Lambda$ be the $4 \times 4$ alternating matrix with entries
  \[ \Lambda_{ij} = \frac{\partial F_1}{\partial a_k} \frac{\partial
    F_2}{\partial a_l} -\frac{\partial F_1}{\partial a_l}
  \frac{\partial F_2}{\partial a_k} \] where $(i,j,k,l)$ is an even
  permutation of $(1,2,3,4)$.  Then specialising $\Lambda$ at $P =
  (a:b:c:d) \in X(9)$ gives a matrix whose rows (or columns) span the
  tangent line at $P$.  Let $D = (a^3 + b^3 + c^3 - 3abc)d$.  We find
  by direct calculation that
\[ \Lambda \,\, \HH(F_i)
\,\, \Lambda \equiv \gamma_i  \, D 
\begin{pmatrix} a^2 & ab & ac & ad \\
ab & b^2 & bc & bd \\ ac & bc & c^2 & cd \\ ad & bd & cd & d^2 
\end{pmatrix}
\mod{(F_1,F_2)} 
\]
for $i=1,2$, where $\gamma_1 = -18 d^3$ and $\gamma_2 = 6 (a^3 + b^3 +
c^3 + 6 a b c)$.  By Lemma~\ref{prop:X9} we have $t =
\gamma_2/\gamma_1$.
\end{proof}

\section{Remarks on twisting}

In this section we make some general remarks on computing twists of
$X(n)$. These will be used to find equations for $X_E^-(9)$ once we
have found equations for $X_E(9)$ by another method.  We suppose that
$X(n)$ has been embedded in (and spans) $\PP^{N-1}$ and that the
action of $\SL_2(\Z/n\Z)$ is given by
\begin{equation*}
  \rhobar : \SL_2(\Z/n\Z)  \to \PGL_N(\Kbar). 
\end{equation*}
We write $\propto$ for equality in $\PGL_N(\Kbar)$, and a superscript
$-T$ for the inverse transpose of a matrix.  Let $\iota =
(\begin{smallmatrix} 1 & 0 \\ 0 & -1 \end{smallmatrix})$.  We further
suppose that
\begin{equation*}
  \rhobar (\iota \gamma \iota) \propto 
  \rhobar(\gamma)^{-T} 
\end{equation*}
for all $\gamma \in \SL_2(\Z/n\Z)$. This condition is satisfied in the
case of $X(9) \subset \PP^3$ since the matrices $\rhobar(S)$ and
$\rhobar(T)$ in Remark~\ref{rem:act} are symmetric.
 
\begin{Lemma}
\label{TwistLem}
Let $E/K$ be an elliptic curve and $\phi : E[n] \to M$ a symplectic
isomorphism defined over $\Kbar$.  Suppose $h \in \GL_N(\Kbar)$
satisfies
\[\sigma(h) h^{-1} \propto \rhobar ( \sigma (\phi) \phi^{-1})
\] for all $\sigma \in \Gal(\Kbar/K)$. Then $X_E(n) \subset \PP^{N-1}$
and $X^-_E(n) \subset \PP^{N-1}$ are the twists of $X(n) \subset
\PP^{N-1}$ given by $ X_E(n) \isom X(n) ;\, \x \mapsto h \x$ and $
X^-_E(n) \isom X(n) ;\, \x \mapsto h^{-T} \x.$
Moreover these isomorphisms are compatible with the maps to the $j$-line.
\end{Lemma}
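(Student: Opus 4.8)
The plan is to establish the twisting isomorphisms by descent, using the fact that $X_E(n)$ and $X_E^-(n)$ are, by definition, the twists of $X(n)$ that parametrise directly and reverse $n$-congruent curves respectively. The Galois action on the $\Kbar$-points of $X(n)$ is twisted through the cocycle measuring the failure of the chosen symplectic isomorphism $\phi$ to be defined over $K$. Concretely, a point of $X(n)$ corresponds to a symplectic isomorphism $\psi : E'[n] \to M$ for a varying curve $E'$; composing with $\phi^{-1}$ identifies the relevant moduli data with the coset space on which $\SL_2(\Z/n\Z)$ acts, and the descent datum for $X_E(n)$ is precisely the cocycle $\sigma \mapsto \rhobar(\sigma(\phi)\phi^{-1})$ in $\PGL_N$.

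First I would verify that $c_\sigma := \rhobar(\sigma(\phi)\phi^{-1})$ is a cocycle valued in $\PGL_N(\Kbar)$, which follows since $\sigma \mapsto \sigma(\phi)\phi^{-1}$ is a cocycle in the symplectic automorphism group $\SL_2(\Z/n\Z)$ and $\rhobar$ is a homomorphism to $\PGL_N$. Next I would observe that the hypothesis $\sigma(h)h^{-1} \propto c_\sigma$ says exactly that the map $\x \mapsto h\x$ intertwines the standard Galois action on $\PP^{N-1}$ with the twisted action defining $X_E(n)$; hence it descends to a $K$-isomorphism $X_E(n) \isom X(n)$, giving the first claim. For the reverse case I would use the standing assumption $\rhobar(\iota\gamma\iota) \propto \rhobar(\gamma)^{-T}$: the reverse-congruence moduli problem replaces $\phi$ by $\phi$ post-composed with the orientation-reversing automorphism $\iota$ of $M$, so its cocycle is $\sigma \mapsto \rhobar(\iota\,\sigma(\phi)\phi^{-1}\,\iota) \propto \rhobar(\sigma(\phi)\phi^{-1})^{-T} = c_\sigma^{-T}$. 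Taking inverse transposes in $\sigma(h)h^{-1} \propto c_\sigma$ gives $\sigma(h^{-T})(h^{-T})^{-1} \propto c_\sigma^{-T}$, so $\x \mapsto h^{-T}\x$ descends to the required $K$-isomorphism $X_E^-(n) \isom X(n)$.

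Finally, the compatibility with the maps to the $j$-line should follow because the $\SL_2(\Z/n\Z)$-action on $X(n)$ has the $j$-line as quotient, and both $\x \mapsto h\x$ and $\x \mapsto h^{-T}\x$ are built from $\rhobar$-values, which commute with passing to this quotient; thus the forgetful map is preserved on the nose after twisting.

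The hard part will be pinning down precisely which cocycle governs the reverse twist and confirming that the appearance of $\iota$ conjugates $\rhobar$ to its inverse transpose rather than to some other equivalent form. Once the standing symmetry hypothesis $\rhobar(\iota\gamma\iota)\propto\rhobar(\gamma)^{-T}$ is invoked, this reduces to the bookkeeping above, but one must be careful that the identification of the reverse moduli problem with $\iota$-twisting matches the Weil-pairing conventions fixed for $M$, so that no stray sign or inversion is lost in $\PGL_N$.
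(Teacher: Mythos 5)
Your argument is correct, but it takes a genuinely different route from the paper: the paper proves this lemma in one line, by observing that it is a special case of \cite[Lemma 3.2]{7and11}, whereas you have reconstructed the underlying descent argument inline. Your reconstruction is essentially the proof of that cited lemma: $X_E(n)$ is the twist of $X(n) \subset \PP^{N-1}$ by the cocycle $c_\sigma = \rhobar(\sigma(\phi)\phi^{-1})$, the hypothesis on $h$ says precisely that the coboundary of $h$ is $c_\sigma$, so $\x \mapsto h\x$ descends to $K$; and for the reverse case, marking $X_E^-(n)$ by the anti-symplectic isomorphism $\iota \circ \phi$ (note $\iota : (\zeta,a) \mapsto (\zeta,-a)$ inverts the pairing on $M$ and commutes with the Galois action) gives the cocycle $\sigma \mapsto \rhobar(\iota\,\sigma(\phi)\phi^{-1}\,\iota) \propto c_\sigma^{-T}$, matched by the coboundary of $h^{-T}$ because inverse-transpose preserves the order of products, i.e. $(AB)^{-T} = A^{-T}B^{-T}$, so $\sigma(h^{-T})(h^{-T})^{-1} \propto c_\sigma^{-T}$; your bookkeeping here is accurate. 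The $j$-line compatibility also holds for the reason you give: the descent data lie in the image of $\rhobar$, and the $j$-map is the quotient by $\SL_2(\Z/n\Z)/\{\pm I\}$, so it is invariant under the twisting. The points you flag as ``the hard part'' are real and are exactly what the citation to \cite[Section 3]{7and11} absorbs: one must check that $\rhobar$ is Galois-equivariant in the appropriate sense (the entries of $\rhobar(S)$ involve $\zeta_9$, and the cocycle condition $c_{\sigma\tau} = {}^\sigma c_\tau \cdot c_\sigma$ requires $\sigma(\rhobar(\gamma)) = \rhobar({}^\sigma\gamma)$ for the natural Galois action on symplectic automorphisms of $M$), and that the identification of the reverse moduli problem with $\iota$-twisting is consistent with the pairing conventions fixed for $M$. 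What the paper's citation buys is brevity and these verifications done once in the earlier paper; what your inline argument buys is self-containedness, at the cost of needing those two equivariance checks to be spelled out rather than asserted.
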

\begin{proof}
This is a special case of \cite[Lemma 3.2]{7and11}. 
\end{proof}

The following lemma generalises \cite[Proposition 7.5]{PSS}.

\begin{Lemma}
\label{lem:dual}
Let $E/K$ be an elliptic curve. If $h \in \GL_N(\Kbar)$ describes
$X_E(n) \subset \PP^{N-1}$ as a twist of $X(n) \subset \PP^{N-1}$ via
$ X_E(n) \isom X(n) ;\, \x \mapsto h \x$, and this isomorphism is
compatible with the maps to the $j$-line, then $X^-_E(n) \subset
\PP^{N-1}$ is the twist of $X(n) \subset \PP^{N-1}$ via $X^-_E(n)
\isom X(n) ;\, \x \mapsto h^{-T} \x.$
\end{Lemma}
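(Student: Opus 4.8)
The plan is to reduce the statement to Lemma~\ref{TwistLem}, whose conclusion already packages both the direct twist (via $h$) and the reverse twist (via $h^{-T}$) together, but under the explicit hypothesis $\sigma(h)h^{-1}\propto\rhobar(\sigma(\phi)\phi^{-1})$. The content of Lemma~\ref{lem:dual} is to recover this cocycle relation from the mere fact that $\x\mapsto h\x$ realises the correct twist compatibly with the $j$-line. Recall from Section~\ref{sec:statres} that $X(n)\to X(1)$ is Galois with group $G=\SL_2(\Z/9\Z)/\{\pm I\}$, realised inside $\PGL_N(\Kbar)$ by the faithful representation $\rhobar$; thus the $\Kbar$-automorphisms of $X(n)$ commuting with the map to the $j$-line are exactly $\rhobar(G)$. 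First I would observe that, since $X_E(n)$ is defined over $K$ and $\x\mapsto h\x$ is a $\Kbar$-isomorphism $X_E(n)\to X(n)$ over the $j$-line, the composite $(\x\mapsto\sigma(h)\x)\circ(\x\mapsto h\x)^{-1}=(\x\mapsto\sigma(h)h^{-1}\x)$ is a $\Kbar$-automorphism of $X(n)$ over the $j$-line for every $\sigma\in\Gal(\Kbar/K)$. Hence $\sigma(h)h^{-1}\propto\rhobar(c_\sigma)$ for a unique $c_\sigma\in G$, and the family $(c_\sigma)$ is a cocycle.

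The next step is to identify the class of $c$. By its modular interpretation, $X_E(n)$ is the twist of $X(n)$ corresponding to the Galois module $E[n]$ with its Weil pairing; concretely, this class in $H^1(\Gal(\Kbar/K),G)$ is represented by $\sigma\mapsto\sigma(\phi)\phi^{-1}$ for any symplectic isomorphism $\phi:E[n]\to M$ defined over $\Kbar$. Because our $h$ describes this same twist, the cocycle $c$ and the cocycle $\sigma\mapsto\sigma(\phi)\phi^{-1}$ are cohomologous. Replacing $\phi$ by $\beta\phi$ for $\beta\in\Aut(M)=\SL_2(\Z/9\Z)$ alters $\sigma(\phi)\phi^{-1}$ by precisely the coboundary attached to $\beta$, so after such an adjustment of $\phi$ I may arrange that the two cocycles coincide modulo $\{\pm I\}$, giving $\sigma(h)h^{-1}\propto\rhobar(\sigma(\phi)\phi^{-1})$ for all $\sigma$.

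This is exactly the hypothesis of Lemma~\ref{TwistLem}, which then yields that $X^-_E(n)\subset\PP^{N-1}$ is the twist of $X(n)$ via $\x\mapsto h^{-T}\x$, as required. The step demanding most care is the second paragraph: one must use that $h$ \emph{describes} $X_E(n)$, and not merely an abstractly isomorphic curve, in order to know that the extracted cocycle $c$ lies in the class of $E[n]$, and one must check that the freedom of choosing $\phi$ supplies exactly the coboundaries needed to pass from cohomologous to equal cocycles, all within the projective ($\propto$) framework. The hypothesis of compatibility with the map to the $j$-line is what makes the first paragraph work at all, since it forces the cochain $\sigma\mapsto\sigma(h)h^{-1}$ to take values in $\rhobar(G)$ rather than in the potentially larger automorphism group of the curve $X(n)$.
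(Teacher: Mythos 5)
Your proposal is correct and is essentially the paper's own argument: both reduce the statement to Lemma~\ref{TwistLem} by using compatibility with the maps to the $j$-line to force the ambiguity in the trivialising map $h$ to lie in $\rhobar(\SL_2(\Z/n\Z))$ (the deck group of $X(n)$ over the $j$-line), up to a $K$-rational change of coordinates. The only cosmetic difference is one of bookkeeping: you absorb the discrepancy by replacing $\phi$ with $\beta\phi$ so that the cocycle hypothesis $\sigma(h)h^{-1} \propto \rhobar(\sigma(\phi)\phi^{-1})$ of Lemma~\ref{TwistLem} holds verbatim for the given $h$, whereas the paper compares the given map $h_2$ with a map $h_1$ as in Lemma~\ref{TwistLem}, writes $h_2 \propto \rhobar(\gamma)\,h_1\,\alpha^{-1}$ with $\gamma \in \SL_2(\Z/n\Z)$ and $\alpha \in \GL_N(K)$, and observes that this change does not affect the desired conclusion.
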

\begin{proof}
  If $h$ is the same as in Lemma~\ref{TwistLem} then the result is
  clear.  In general if $h_1$ and $h_2$ are two such maps then there
  is a commutative diagram
  \[ \xymatrix{ X_E(n) \ar@{=}[d] \ar[r]^{h_1} & X(n) \ar[d]^{\beta} \\
  X_E(n) \ar[r]^{h_2} & X(n) } \] 
  where $\beta$ is an automorphism of $X(n)$ defined over $\Kbar$.
  Since $h_1$ and $h_2$ are compatible with the maps to the $j$-line,
  it follows that $h_2 \propto \rhobar(\gamma) h_1 \alpha^{-1}$ for
  some $\gamma \in \SL_2(\Z/n\Z)$ and $\alpha \in \GL_N(K)$. The
  change from $h_1$ to $h_2$ makes no difference to the desired
  conclusions.
\end{proof}

\section{Formulae in the case of a rational $3$-torsion point}

In this section we find formulae for $X^{\pm}_E(9)$ and $X^{\pm}_E(9)
\to X^{\pm}_E(3)$ in the case $E$ has a rational $3$-torsion point.

\begin{Lemma} 
\label{lem:3tors}
Let $E/K$ be an elliptic curve with a rational $3$-torsion point $T$,
and discriminant $\Delta$. Then $E$ has a Weierstrass equation of the
form
\begin{equation}
\label{Weqn3}
y^2 + a_1 xy + a_3 y = x^3
\end{equation} with $T= (0,0)$.
Moreover $K(E[3]) = K(\zeta_3, \sqrt[3]{\Delta})$.
\end{Lemma}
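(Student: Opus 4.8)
The plan is to establish two separate claims: first that every elliptic curve over $K$ with a rational $3$-torsion point admits a Weierstrass model of the shape $y^2 + a_1 xy + a_3 y = x^3$ with the $3$-torsion point at the origin, and second the field-theoretic identity $K(E[3]) = K(\zeta_3, \sqrt[3]{\Delta})$.

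\medskip

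For the first claim, I would start from an arbitrary Weierstrass equation for $E$ and move the rational $3$-torsion point $T$ to the origin by a translation, so that $T = (0,0)$. After this translation the curve has the form $y^2 + a_1 xy + a_3 y = x^3 + a_2 x^2 + a_4 x + a_6$, and the condition $T=(0,0)$ forces $a_6 = 0$. The key observation is that for a point on a Weierstrass curve to be $3$-torsion, it must be a flex of the cubic, equivalently $2T = -T$, i.e. the tangent line at $T$ meets the curve only at $T$. I would impose this flex condition at the origin and show it forces $a_4 = 0$ and then $a_2 = 0$ (the tangent line at $(0,0)$ is $y = -(a_3/a_1)\cdot\text{(something)}$, and requiring triple contact kills the lower-order coefficients). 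A cleaner route is to note that $-T$ lies on the vertical-type analysis: the standard formulae for $x(2T)$ in terms of $a_i$ simplify drastically when $a_6=0$, and setting $x(2T)=x(-T)=0$ (since $2T=-T$) yields $a_4=a_2=0$ directly. This reduces the equation to $y^2 + a_1 xy + a_3 y = x^3$ as desired.

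\medskip

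For the second claim I would compute $K(E[3])$ directly from this normal form. The $3$-division polynomial $\psi_3(x)$ of $y^2 + a_1 xy + a_3 y = x^3$ factors off the rational root $x=0$ coming from $T=(0,0)$; the remaining factor is a cubic in $x$ whose splitting field, together with the $y$-coordinates, generates $E[3]$. I expect that the discriminant $\Delta$ of this model equals $a_3^3(a_1^3 - 27 a_3)$ up to a sign/power (this is a routine evaluation of the discriminant formula when $a_2=a_4=a_6=0$), and that the nontrivial $3$-torsion $x$-coordinates are governed by a pure cubic, so that adjoining them amounts to adjoining $\sqrt[3]{\Delta}$ and $\zeta_3$. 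Concretely, I would exhibit the $x$-coordinate of a nontrivial $3$-torsion point as a rational expression in $\sqrt[3]{\Delta}$, verify that the full Galois closure is $K(\zeta_3, \sqrt[3]{\Delta})$, and conclude both containments. One containment ($K(E[3]) \supseteq K(\zeta_3,\sqrt[3]{\Delta})$) follows because $\zeta_3$ appears via the Weil pairing on $E[3]$ and $\sqrt[3]{\Delta}$ appears in the $3$-torsion coordinates; the reverse containment follows since all of $E[3]$ is generated by $T$ (already rational) together with one nontrivial point whose coordinates lie in $K(\zeta_3,\sqrt[3]{\Delta})$.

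\medskip

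The main obstacle I anticipate is the second claim rather than the first: showing $K(E[3]) = K(\zeta_3,\sqrt[3]{\Delta})$ cleanly requires identifying the splitting field of the relevant cubic factor of $\psi_3$ with a pure cubic extension, and making the discriminant bookkeeping precise so that $\sqrt[3]{\Delta}$ (and not merely some cube root of a related quantity) is exactly what gets adjoined. The first normalisation step is essentially formal once the flex condition is exploited, but care is needed to confirm that $a_1$ and $a_3$ cannot both degenerate (otherwise the model is singular), which is guaranteed by $\Delta \neq 0$.
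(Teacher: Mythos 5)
Your normalisation step contains a genuine gap: being a flex is one condition, not two, so it cannot force both $a_4=0$ and $a_2=0$. Concretely, after translating $T$ to $(0,0)$ (which kills $a_6$), the tangent line at the origin is $y=(a_4/a_3)x$, and substituting this into $y^2+a_1xy+a_3y=x^3+a_2x^2+a_4x$ shows that $(0,0)$ is a flex (equivalently $x(2T)=x(-T)=0$, i.e.\ $3T=0$) precisely when $a_2a_3^2=a_4^2+a_1a_3a_4$; for any $a_4\neq 0$ this produces a curve with $(0,0)$ of exact order $3$ that is \emph{not} of the form~(\ref{Weqn3}). Your ``cleaner route'' via $x(2T)=x(-T)=0$ fails for the same reason: it is a single equation in the $a_i$ and yields only this one relation, never ``$a_4=a_2=0$ directly''. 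The missing ingredient is a further admissible change of coordinates, namely the shear $y \leftarrow y + (a_4/a_3)x$, which fixes $(0,0)$, preserves $a_6=0$, and replaces $a_4$ by $0$; it is legitimate because $a_3\neq 0$, and note that this follows not from $\Delta\neq 0$ (as you suggest at the end) but from $2T\neq 0$: since $-T=(0,-a_3)$, the vanishing of $a_3$ would give $T=-T$. This is exactly the paper's argument: after the shear the tangent at $(0,0)$ is $y=0$, the line $y=0$ meets $E$ in the divisor $2(0,0)+(-a_2,0)$, so $-2T=(-a_2,0)$, and $3T=0$ forces $-2T=T$, hence $a_2=0$.

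Your plan for the field statement is sound and is essentially the paper's computation in different clothing. The paper records $\Delta=a_3^3(a_1^3-27a_3)$ (this is exact, with no sign or extraneous power, so your ``up to a sign/power'' hedge is unnecessary) and simply exhibits the basis point $T'=\bigl(3a_3/(\delta-a_1),\,a_3(\zeta_3\delta-a_1)/(\delta-a_1)\bigr)$ with $\delta=\sqrt[3]{a_1^3-27a_3}$, from which both containments are immediate since $K(\zeta_3,\sqrt[3]{\Delta})=K(\zeta_3,a_3\delta)=K(\zeta_3,\delta)$. Your route through the division polynomial would work: for the model~(\ref{Weqn3}) one gets $\psi_3=x\,(3x^3+a_1^2x^2+3a_1a_3x+3a_3^2)$, and the roots of the cubic factor are $3a_3/(\zeta_3^i\delta-a_1)$; but to close the argument you must still produce these roots (or otherwise identify the splitting field as $K(\zeta_3,\delta)$ rather than the cube root of some merely related quantity), so nothing is saved over writing down $T'$ directly. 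Also check, as you implicitly use, that $T$ and $T'$ generate $E[3]$: this holds because $x(T')\neq 0$ while every point of $\langle T\rangle\setminus\{0\}$ has $x$-coordinate $0$.
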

\begin{proof}
  Moving $T$ to $(0,0)$ gives a Weierstrass equation $y^2 + a_1 xy +
  a_3 y = x^3 + a_2 x^2 + a_4 x$.  Since $2T \not= 0$ we have $a_3
  \not= 0$. By a substitution $y \leftarrow y + r x$ we may assume
  $a_4 = 0$. Then $E$ meets the line $y=0$ in divisor
  $2(0,0)+(-a_2,0)$. So for $3T = 0$ we need $a_2 =0$. For the last
  statement we note that $\Delta = a_3^3 (a_1^3 - 27a_3)$ and $E[3]$
  has basis $T = (0,0)$ and $T' = ( 3 a_3/(\delta - a_1), a_3 (\zeta_3
  \delta - a_1 )/(\delta - a_1))$ where $\delta = \sqrt[3]{a_1^3 -
    27a_3}$.
\end{proof}

\begin{Lemma} 
\label{lem:E3}
Let $E/K$ be an elliptic curve with a rational $3$-torsion point, and
discriminant $\Delta$.
\begin{enumerate} \item The family of elliptic curves parametrised by
  $Y(3) = \Aff^1 \setminus \{ t^3 = 1 \}$ is \[y^2 + 3 t x y + (t^3 -
  1) y = x^3.\] \item The family of elliptic curves parametrised by
  $Y_E^{\pm}(3) = \Aff^1 \setminus \{ t^3 = \Delta^{\pm 1} \}$ is
\[y^2 + 3 t x y + (t^3 - \Delta^{\pm 1}) y = x^3.\]
\end{enumerate}
\end{Lemma}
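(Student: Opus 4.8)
The plan is to establish~(i) by an explicit change of coordinates on the Hesse pencil, and then to deduce~(ii) by recognising its two families as twists of the family in~(i).

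For~(i), I would use the fact, recalled in Section~\ref{sec:X3andX9}, that $Y(3)$ parametrises the smooth members of the Hesse pencil~(\ref{HessePencil}). The nine base points of this pencil are the inflection points common to all its members, and they form the full $3$-torsion of each smooth member once an origin is chosen among them. I would take the $K$-rational inflection point $(x:y:z)=(1:-1:0)$ as origin and apply the standard procedure carrying a plane cubic through a rational inflection point into Weierstrass form (move the inflectional tangent to the line at infinity and normalise). Since this transformation is defined over $K(t)$, the resulting Weierstrass family is the universal family over $Y(3)$ rather than a twist of it, and I expect it to simplify to $y^2+3txy+(t^3-1)y=x^3$. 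As a consistency check I would confirm that $(0,0)$ is a $3$-torsion point and that, by the formula $\Delta = a_3^3(a_1^3-27a_3)$ from Lemma~\ref{lem:3tors}, the discriminant equals $27(t^3-1)^3$, which vanishes exactly on the removed locus $t^3=1$.

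For~(ii), I would first observe that any $3$-congruence $\phi:E[3]\to E'[3]$ is an isomorphism of Galois modules, so the image $\phi(T)$ of the rational $3$-torsion point $T$ of $E$ is a rational $3$-torsion point of every curve $E'$ parametrised by $Y_E^{\pm}(3)$. Hence each such $E'$ has a model of the shape in Lemma~\ref{lem:3tors}, and the family over $Y_E^{\pm}(3)$ is a twist of the family in~(i). I would pin this twist down explicitly: the change of variables $t=\Delta^{\pm1/3}u$, $x=\Delta^{\pm2/3}X$, $y=\Delta^{\pm1}Y$ transforms $y^2+3txy+(t^3-\Delta^{\pm1})y=x^3$ into $Y^2+3uXY+(u^3-1)Y=X^3$, the family in~(i). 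Thus the candidate family in~(ii) becomes isomorphic to the family in~(i) over $K(\sqrt[3]{\Delta})\subseteq K(\zeta_3,\sqrt[3]{\Delta})=K(E[3])$, so it is indeed a twist trivialising over the field of definition of $E[3]$, and its discriminant $27(t^3-\Delta^{\pm1})^3\Delta^{\pm1}$ degenerates precisely on the deleted locus $t^3=\Delta^{\pm1}$.

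Matching each sign to the correct congruence is the step I expect to be the main obstacle, since both families in~(ii) trivialise over the same field $K(\sqrt[3]{\Delta})$ and so cannot be told apart by their splitting fields alone. To separate them I would track the cocycle $\sigma\mapsto\sigma(\phi)\phi^{-1}$: for $\sigma$ with $\sigma(\sqrt[3]{\Delta})=\zeta_3^{\,j}\sqrt[3]{\Delta}$ I would compute the induced automorphism of $E[3]$ in the basis $\{T,T'\}$ of Lemma~\ref{lem:3tors} and compare it, through the Weil pairing $e_3$, with the symplectic structure carried by the family in~(i). The claim to verify is that replacing $\Delta$ by $\Delta^{-1}$ inverts the relevant character, which is exactly the effect of passing from a direct congruence to a reverse one, where $e_3(\phi P,\phi Q)=e_3(P,Q)^{-1}$; this pins the $+$ family to $X_E(3)$ and the $-$ family to $X_E^-(3)$. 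As an independent cross-check that already carries the $\pm$ correctly, I would specialise the formulae of Theorem~\ref{MainThm3} to a curve $y^2+a_1xy+a_3y=x^3$ and re-parametrise, expecting to recover $y^2+3txy+(t^3-\Delta^{\pm1})y=x^3$.
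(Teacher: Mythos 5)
Your part~(i) breaks at the step you flag as routine. Putting the Hesse cubic $x^3+y^3+z^3-3txyz=0$ into Weierstrass form with origin the rational inflection point $(1:-1:0)$ does \emph{not} simplify to $y^2+3txy+(t^3-1)y=x^3$: carrying out the computation (tangent line $x+y+tz=0$, etc.) gives a model with $c_4=t(t^3+8)$ and $c_6=-(t^6-20t^3-8)$, whereas the stated family has $c_4=9t(t^3+8)$ and $c_6=27(t^6-20t^3-8)$. The ratio $(9,-27)=\bigl((-3)^2,(-3)^3\bigr)$ is not of the form $(u^4,u^6)$ with $u\in K(t)$ unless $\sqrt{-3}\in K$, so at the same parameter $t$ the two curves are non-isomorphic: the stated family is the quadratic twist by $-3$ of the Hesse family, equivalently (as one checks on $c_4,c_6$) its pullback along the substitution $t\leftarrow (t+2)/(t-1)$, which is a deck transformation of $X(3)\to X(1)$ swapping the cusps $1\leftrightarrow\infty$ and $\zeta_3\leftrightarrow\zeta_3^2$. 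This twist-or-substitution step is exactly the content of the paper's one-line proof of~(i), and it is the idea missing from your argument. Your two consistency checks cannot detect the problem: the $(-3)$-twisted family has the same discriminant locus $t^3=1$, and it too has a rational $3$-torsion point, since for these fibres (whose $3$-torsion is $\mu_3\times\Z/3\Z$) twisting by $-3$ merely interchanges the $\mu_3$ and $\Z/3\Z$ constituents. So as written, part~(i) of your proposal establishes the wrong family, and repairing it requires the relabelling argument you do not supply.

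In part~(ii), your reduction only shows that the candidate families are twists of the family in~(i) trivialising over $K(\sqrt[3]{\Delta})$; as you yourself concede, this cannot separate $X_E(3)$ from $X_E^-(3)$, so the entire weight of the proof falls on the cocycle computation matching $\sigma\mapsto\sigma(\phi)\phi^{-1}$ against the symplectic structure --- and that step is announced (``the claim to verify is \ldots'') rather than carried out. It is moreover entangled with the gap in~(i): the direct/reverse labels depend on the symplectic convention attached to the family in~(i), which your argument has not pinned down, precisely because the $(-3)$-twist ambiguity there inverts the Weil pairing. The paper avoids all of this: its proof of~(ii) is exactly what you relegate to an ``independent cross-check'', namely verifying that the substitutions $(r,s)=(a_1^2t-a_1^3a_3+36a_3^2,\ t-a_1a_3)$ in the direct case and $(r,s)=((a_1^3a_3-36a_3^2)t+2a_1^2,\ a_1a_3t+2)$ in the reverse case transform the family~(\ref{W3}) of Theorem~\ref{MainThm3} into $y^2+3txy+(t^3-\Delta^{\pm 1})y=x^3$. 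Since Theorem~\ref{MainThm3} already carries the correct $\pm$ labels from the cited literature, this finite computation settles the sign-matching and renders the Galois-cohomological analysis unnecessary; I recommend promoting that cross-check to be the proof (the paper also notes a direct alternative modelled on the proof of Lemma~\ref{lem:3tors}).
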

\begin{proof}
  The family of elliptic curves in (i) is the same as that
  in~(\ref{HessePencil}), after a quadratic twist by $-3$ or a
  substitution $t \leftarrow (t+2)/(t-1)$.  Let $c_4, c_6, \Delta$ be
  the usual quantities associated to the Weierstrass
  equation~(\ref{Weqn3}). Then part (ii) is the special case of
  Theorem~\ref{MainThm3} with $(r,s) = (a_1^2 t - a_1^3 a_3 + 36
  a_3^2, t - a_1a_3),$ respectively $(r,s) = ((a_1^3 a_3 - 36 a_3^2) t
  + 2 a_1^2, a_1 a_3 t + 2)$.  Alternatively the lemma may be proved
  directly by an argument similar to the proof of
  Lemma~\ref{lem:3tors}.
\end{proof}

For $a \in K$ we write $\sqrt[3]{a}$ for the image of $x$ in
$K[x]/(x^3 - a)$. Each equation involving $\sqrt[3]{a}$ in the next
theorem is written as a short-hand for three equations, one for each
$K$-algebra homomorphism $K[x]/(x^3 - a) \to \Kbar$.

\begin{Theorem}
\label{prop:main3}
Let $E/K$ be the elliptic curve
\begin{equation}
\label{weqn3}
y^2 + 3 t_E x y + (t_E^3 - \Delta) y = x^3.
\end{equation}
Then $X(9)$ and $X_E^{\pm}(9)$ have equations in $\Aff^4$ (with
co-ordinates $t,u,v,w$) given by
\begin{align}
\label{eqn:X(9)}
&\hspace{2em}\,\, X(9) && t - \sqrt[3]{1}  = 9 \left( u + v \sqrt[3]{1} + w  (\sqrt[3]{1})^2 \right)^3, \\
\label{eqn:XE9}
&\hspace{2em}X_E(9) && t - \sqrt[3]{\Delta}  = (t_E - \sqrt[3]{\Delta}) \left( u + v \sqrt[3]{\Delta} + w  (\sqrt[3]{\Delta})^2 \right)^3, \\
\label{eqn:XEm9}
&\hspace{2em}X_E^-(9) && t - (\sqrt[3]{\Delta})^{-1}  = \frac{3}{t_E - \sqrt[3]{\Delta}} \left( u + v \sqrt[3]{\Delta} + w  (\sqrt[3]{\Delta})^2 \right)^3.
\end{align}
Moreover the maps to $X(3)$ and $X_E^{\pm}(3)$ are given by 
$(t,u,v,w) \mapsto t$, where $t$ is the co-ordinate in Lemma~\ref{lem:E3}.
\end{Theorem}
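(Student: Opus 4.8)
The plan is to establish the untwisted equation (\ref{eqn:X(9)}) directly from the projective model of Lemma~\ref{prop:X9}, and then to deduce the two twisted equations (\ref{eqn:XE9}) and (\ref{eqn:XEm9}) from the twisting machinery of Lemmas~\ref{TwistLem} and~\ref{lem:dual}.

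First I would diagonalise the $(\Z/3\Z)^3$-action on $X(9) \subset \PP^3$ by passing to the discrete Fourier (``resolvent'') coordinates $p = a+b+c$, $q = a + \zeta_3 b + \zeta_3^2 c$, $r = a + \zeta_3^2 b + \zeta_3 c$. A short expansion gives $q^3 = P + 3\zeta_3 S_1 + 3\zeta_3^2 S_2$ and its conjugates, where $P = a^3+b^3+c^3+6abc$, $S_1 = a^2 b + b^2 c + c^2 a$ and $S_2 = a b^2 + b c^2 + c a^2$. Imposing the defining equations $S_1 = 0$, $S_2 = d^3$ of (\ref{eqn:X9}) and substituting the forgetful map $t_H = -P/(3 d^3)$ of (\ref{eqn:forget}) collapses these to $p^3 = -3 d^3 (t_H - 1)$, $q^3 = -3 d^3 (t_H - \zeta_3^2)$, $r^3 = -3 d^3 (t_H - \zeta_3)$; that is, each resolvent is a cube root of a constant times a linear factor of $t_H^3 - 1$. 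I would then convert the Hesse parameter $t_H$ to the parameter $t$ of Lemma~\ref{lem:E3} using the quadratic twist by $-3$ and the substitution $t_H = (t+2)/(t-1)$ recorded in its proof, and define $(u,v,w)$ as the inverse Fourier transform of a suitable rescaling of $(p,q,r)/d$, so that $u + v\omega + w\omega^2$ is the resolvent attached to the cube root $\omega$ of $1$. After this rescaling the three conjugate relations become exactly $t - \sqrt[3]{1} = 9(u + v\sqrt[3]{1} + w(\sqrt[3]{1})^2)^3$, which is (\ref{eqn:X(9)}); and since $t$ is by construction the $X(3)$-coordinate, the forgetful map is $(t,u,v,w) \mapsto t$.

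Next I would obtain (\ref{eqn:XE9}). By Lemma~\ref{lem:3tors} we have $K(E[3]) = K(\zeta_3, \sqrt[3]{\Delta})$, and by Lemma~\ref{lem:E3}(ii) the base $X_E(3)$ is $\Aff^1 \setminus \{ t^3 = \Delta \}$, with $E$ itself sitting at $t = t_E$ because (\ref{weqn3}) is the $+$ member of that family. The computation above exhibits $X(9) \to X(3)$ as the $(\Z/3\Z)^3$-Kummer cover got by adjoining the cube roots of the conjugate linear forms $t - \omega$. By Lemma~\ref{TwistLem} the curve $X_E(9)$ is the twist $\x \mapsto h\x$ of this cover, compatibly with the maps to the $j$-line; the effect of the twist is to replace the splitting algebra $K[x]/(x^3-1)$ of the cusps by $K[x]/(x^3-\Delta)$, i.e. to replace $\sqrt[3]{1}$ by $\sqrt[3]{\Delta}$ throughout. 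The one remaining scalar is then pinned down by the normalisation at the rational point $t = t_E$ of $X_E(9)$ representing $E$: the trivial congruence forces $u + v\sqrt[3]{\Delta} + w(\sqrt[3]{\Delta})^2$ to be a cube root of unity there, which forces the constant to be $(t_E - \sqrt[3]{\Delta})$ and gives (\ref{eqn:XE9}). Finally, for (\ref{eqn:XEm9}) I would apply Lemma~\ref{lem:dual}: since $\rhobar(S)$ and $\rhobar(T)$ are symmetric, $X_E^-(9)$ is the inverse-transpose twist $\x \mapsto h^{-T}\x$, whose effect on the Kummer data is to invert the cocycle and send $\sqrt[3]{\Delta}$ to $(\sqrt[3]{\Delta})^{-1}$ (so that the cusps move to $t^3 = \Delta^{-1}$, matching Lemma~\ref{lem:E3}(ii)), while transforming the scalar $(t_E - \sqrt[3]{\Delta})$ into $3/(t_E - \sqrt[3]{\Delta})$. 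Compatibility with the $j$-line again yields the forgetful map $(t,u,v,w) \mapsto t$.

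I expect the main obstacle to be tracking the scalars exactly. The overall constants ($9$; $t_E - \sqrt[3]{\Delta}$; $3/(t_E - \sqrt[3]{\Delta})$) and the precise linear identification of $(u,v,w)$ with the resolvents $p,q,r$ must be carried consistently through the Fourier substitution, the quadratic twist by $-3$ together with the M\"obius change $t_H = (t+2)/(t-1)$, and the inverse-transpose twist in the reverse case, any one of which can introduce a spurious factor of $3$ or a sign. In particular the appearance of the factor $3$ (rather than a square) in the reverse case encodes the reversal of the Weil pairing, and the cleanest way to certify all three constants is to verify the stated equations over $\Kbar$ by direct substitution, or in Magma.
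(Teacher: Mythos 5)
Your proposal follows essentially the same route as the paper's proof: your resolvent computation in the first paragraph is precisely how Lemma~\ref{prop:X9} and Remark~\ref{rem:expand} produce~(\ref{eqn:X(9)}) (up to a permutation of variables the expanded equations are those of Lemma~\ref{prop:X9}, and the constant term reproduces~(\ref{eqn:forget})); the direct case is pinned down by the rational point above $t=t_E$, exactly as in the paper; and the reverse case is obtained from Lemma~\ref{lem:dual} by inverse-transposing an explicit $h$ that is diagonal in your resolvent coordinates. All three constants you assert are the correct ones. One structural remark: what licenses the claim that the twisted cover has the stated Kummer shape ``$t-(\sqrt[3]{\Delta})^{\pm 1} = c_{\pm}(E)\,(u+v\sqrt[3]{\Delta}+w(\sqrt[3]{\Delta})^2)^3$'' up to a constant is not Lemma~\ref{TwistLem} by itself, but the function-field argument the paper makes explicit: the subgroup $B\subset\K_3^{\times}/(\K_3^{\times})^3$ of functions supported on the cusps has $\F_3$-dimension $3$ and $\K_9=\K_3(\sqrt[3]{B})$, so Galois-equivariance (with the cusps of $X_E^{\pm}(3)$ at $t^3=\Delta^{\pm 1}$, $t=\infty$ by Lemma~\ref{lem:E3}) forces the shape. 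Your phrasing gestures at this but should be said.

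Three local corrections. First, the quadratic twist by $-3$ and the substitution $t\leftarrow (t+2)/(t-1)$ in the proof of Lemma~\ref{lem:E3} are \emph{alternatives}, not to be composed: under the paper's conventions no reparametrisation is needed at all, since your relations $p^3=-3d^3(t_H-1)$ etc.\ already yield $t_H-\omega = 9(\text{resolvent})^3$ after rescaling by $-1/(3d)$, and the $-3$-twist is invisible on $X_E^{\pm}(9)$ by Remark~\ref{rem:scale}. If you actually applied the M\"obius map you would destroy the uniform constant $9$ (e.g.\ $t_H-1=3/(t-1)$, and the other factors pick up constants like $1-\zeta_3$). Second, the rational point above $t=t_E$ does not force $u+v\sqrt[3]{\Delta}+w(\sqrt[3]{\Delta})^2$ to be a root of unity there; it gives $t_E-\sqrt[3]{\Delta}=c_+\theta^3$ for some $\theta\in (K[x]/(x^3-\Delta))^{\times}$, and since $c_+$ is only well defined up to cubes in that algebra (rescale $(u,v,w)$ $K$-rationally by multiplication by $\theta^{-1}$), one may \emph{take} $c_+=t_E-\sqrt[3]{\Delta}$ --- same conclusion, different mechanism. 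Third, your reverse-case scalar is asserted rather than derived, and this is the one genuinely nontrivial computation: the paper writes $h$ explicitly via $\alpha^3=(t_E-\delta)/(9\delta)$ and its conjugates (conjugated by the discrete Fourier matrix and composed with $\operatorname{diag}(1,\delta,\delta^2)$, $\delta^3=\Delta$), and $h^{-T}$ first yields $t-(\sqrt[3]{\Delta})^{-1} = \frac{9^2}{t_E-\sqrt[3]{\Delta}}\bigl(u+v(\sqrt[3]{\Delta})^{-1}+w(\sqrt[3]{\Delta})^{-2}\bigr)^3$; only the further $K$-rational change $(u,v,w)\mapsto 3(u,\,w/\Delta,\,v/\Delta)$ converts $9^2$ into the stated $3$ and the basis into $1,\sqrt[3]{\Delta},(\sqrt[3]{\Delta})^2$. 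Your framework contains all the ingredients for this, but ``verify the stated equations by direct substitution'' is only meaningful once $h$ has been made explicit, since the constant is exactly what is at stake. These are fixable slips in an otherwise correct plan, not a wrong approach.
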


\begin{Remark}
\label{rem:expand}
The equations in Theorem~\ref{prop:main3} may be re-written as
follows. First we expand and equate coefficients of $1, \sqrt[3]{1},
(\sqrt[3]{1})^2$, respectively $1, \sqrt[3]{\Delta},
(\sqrt[3]{\Delta})^2$.  We then eliminate $t$, and homogenise
(introducing a new variable $s$) to give
\begin{equation*}
 X(9) = \left\{ \begin{aligned}  u^2 v + v^2 w + w^2 u &= s^3 \\
  u^2 w + v^2 u + w^2 v &= 0 \end{aligned} \right\} \subset \PP^3 
\end{equation*}
\begin{equation*}
 X_E(9) = \left\{ \begin{aligned}  f_0(u,v,w) - t_E f_1(u,v,w) &= s^3 \\
 f_1(u,v,w) - t_E f_2(u,v,w) &= 0   \end{aligned} \right\} \subset \PP^3 
\end{equation*}
\begin{equation*}
 X_E^-(9) =  \left\{ \begin{aligned}  f_0(u,v,w) + t_E f_1(u,v,w) &= 9 s^3 \\
\Delta f_2(u,v,w) + 9 t_E s^3 &= 0 
\end{aligned} \right\} \subset \PP^3  
\end{equation*}
where
\begin{align*}
f_0(u,v,w) &= u^3 + \Delta v^3 + \Delta^2 w^3 + 6 \Delta u v w \\
f_1(u,v,w) &= 3 (u^2 v + \Delta v^2 w + \Delta w^2 u ) \\
f_2(u,v,w) &= 3 (u^2 w +  v^2 u  + \Delta w^2 v ).
\end{align*}

\end{Remark}

\begin{ProofOf}{Theorem~\ref{prop:main3}}
  The formulae for $X(9)$ and $X(9) \to X(3)$ were already established
  in Lemma~\ref{prop:X9}.

 Let $\K_n$ be the function field of $X(n)$ over $\Kbar$. Let $B
 \subset \K_3^\times/(\K_3^\times)^3$ be the subgroup generated by all
 rational functions on $X(3)$ with zeros and poles only at the cusps.
 Since $X(3) \isom \PP^1$ has $4$ cusps, $B$ has dimension $3$ as an
 $\F_3$-vector space. By~(\ref{eqn:X(9)}) we have $\K_9 = \K_3(
 \sqrt[3]{B})$.  So if $t$ is the co-ordinate on $X_E^{\pm}(3)$ in
 Lemma~\ref{lem:E3} then $X_E^{\pm}(9)$ has equations
 \[
t - (\sqrt[3]{\Delta})^{\pm 1}  = c_{\pm}(E) 
   \left( u + v \sqrt[3]{\Delta} + w  (\sqrt[3]{\Delta})^2 \right)^3 
   \]
   for some constant $c_{\pm}(E)$. Since on $X_E(9)$ there is a
   rational point (corresponding to $E$) above the point $t = t_E$ on
   $X_E(3)$, we can take $c_+(E) = t_E - \sqrt[3]{\Delta}$.

   The equations~(\ref{eqn:X(9)}) and~(\ref{eqn:XE9}) for $X(9)$ and
   $X_E(9)$ differ by a change of co-ordinates defined over
   $\Kbar$. In writing down this change of co-ordinates it is
   important to remember that each of~(\ref{eqn:X(9)})
   and~(\ref{eqn:XE9}) is really three equations.  Let
   $\alpha,\beta,\gamma,\delta \in \Kbar$ satisfy
   \[ \alpha^3 = \frac{t_E - \delta}{9 \delta}, \quad \beta^3 =
   \frac{t_E - \zeta_3 \delta}{9 \zeta_3 \delta}, \quad \gamma^3 =
   \frac{t_E - \zeta^2_3 \delta}{9 \zeta^2_3 \delta}, \quad \delta^3 =
   \Delta. \] Then an isomorphism $X_E(9) \to X(9)$ is given by
   \[ \begin{pmatrix} u \\ v \\ w \end{pmatrix} \mapsto
   \begin{pmatrix} 1 & 1 & 1 \\ 1 & \zeta_3 & \zeta_3^2 \\
     1 & \zeta_3^2 & \zeta_3 \end{pmatrix}^{-1}
   \begin{pmatrix} \alpha & & \\ & \beta & \\ & & \gamma \end{pmatrix}
   \begin{pmatrix} 1 & 1 & 1 \\ 1 & \zeta_3 & \zeta_3^2 \\
     1 & \zeta_3^2 & \zeta_3 \end{pmatrix}
   \begin{pmatrix} 1 & & \\ & \delta & \\ & & \delta^2 \end{pmatrix}
   \begin{pmatrix} u \\ v \\ w \end{pmatrix}.\] 

   By Lemma~\ref{lem:dual} an isomorphism $X^-_E(9) \to X(9)$ is given
   by \[ \begin{pmatrix} u \\ v \\ w \end{pmatrix} \mapsto
   \begin{pmatrix} 1 & 1 & 1 \\ 1 & \zeta_3^2 & \zeta_3 \\
     1 & \zeta_3 & \zeta_3^2 \end{pmatrix}^{-1}
   \begin{pmatrix} \alpha^{-1} & & \\ & \beta^{-1} & \\ & &
     \gamma^{-1}
   \end{pmatrix}
   \begin{pmatrix} 1 & 1 & 1 \\ 1 & \zeta_3^2 & \zeta_3 \\
     1 & \zeta_3 & \zeta_3^2 \end{pmatrix}
   \begin{pmatrix} 1 & & \\ & \delta^{-1} & \\ & &
     \delta^{-2} \end{pmatrix}
   \begin{pmatrix} u \\ v \\ w \end{pmatrix}.\] Therefore $X_E^-(9)$
   has equations
   \begin{equation}
     \label{eqn:XEm9alt}
     t - (\sqrt[3]{\Delta})^{-1}  = \frac{9^2}{t_E - \sqrt[3]{\Delta}}
     \left( u + v (\sqrt[3]{\Delta})^{-1} + w  
          (\sqrt[3]{\Delta})^{-2} \right)^3. 
   \end{equation}
   The equations~(\ref{eqn:XEm9}) and~(\ref{eqn:XEm9alt}), though
   different, are related by a change of co-ordinates defined over
   $K$.
\end{ProofOf}

\section{Completion of proofs}

In this section we complete the proofs of Theorems~\ref{MainThm9},
\ref{thm:geom} and \ref{thm:ellsurf}.  We start by using the results
of the last section to prove Theorems~\ref{MainThm9}
and~\ref{thm:geom} in the case $E(K)[3] \not= 0$. By
Lemma~\ref{lem:3tors} we may assume $E$ takes the
form~(\ref{weqn3}). Then $E$ has shorter Weierstrass equation $y^2 =
x^3 + a x + b$ where
\begin{align*}
 a &= -24 \Delta t_E - 3 t_E^4, \\
 b &= 16 \Delta^2 + 40 \Delta t_E^3 - 2 t_E^6.
\end{align*}
The equations for $X_E(9)$ in Theorem~\ref{MainThm9} and
Remark~\ref{rem:expand} are related by
\[\begin{pmatrix} u \\ v \\ w \\ s \end{pmatrix}
=\begin{pmatrix}
1 &  -3 t_E^2 &  12 \Delta t_E - 3 t_E^4 &  36 \Delta t_E^3 - 9 t_E^6 \\
0 &  -12 t_E &  24 \Delta + 12 t_E^3 &  -216 \Delta t_E^2 \\
0 &  -12 &  36 t_E^2 &  -144 \Delta t_E - 72 t_E^4 \\
1 &  9 t_E^2 &  -36 \Delta t_E + 9 t_E^4 &  96 \Delta^2 + 132 \Delta t_E^3 + 15 t_E^6 \end{pmatrix}
\begin{pmatrix} x \\ y \\ z \\ t \end{pmatrix}. \] The equations for
$X^{-}_E(9)$ in Theorem~\ref{MainThm9} and Remark~\ref{rem:expand} are
related by
\[ \hspace{-1em} \begin{pmatrix} u \\ v \\ w \\ s \end{pmatrix}
=\begin{pmatrix}
  \Delta t_E &  -12 \Delta^2 + 12 \Delta t_E^3 &  -4 \Delta^2 + 7 \Delta t_E^3  & -12 \Delta^2 t_E^2 + 3 \Delta t_E^5 \\
  -2 \Delta &  0  & -6 \Delta t_E^2 &  18 \Delta t_E^4 \\
  t_E^2  & 0 &  4 \Delta t_E - t_E^4 &  -16 \Delta^2 + 8 \Delta t_E^3 - t_E^6 \\
  -\Delta t_E & -4 \Delta^2 + 4 \Delta t_E^3 & -4 \Delta^2 + \Delta
  t_E^3 & 12 \Delta^2 t_E^2 - 3 \Delta t_E^5 \end{pmatrix}
\begin{pmatrix} x \\ y \\ z \\ t \end{pmatrix}. \hspace{-1em} \] These
$4$ by $4$ matrices have determinants $-2^{10} 3^3 (t_E^3-\Delta)^3$
and $2^{10} \Delta^3 (t_E^3-\Delta)^4$, and so are non-singular by the
discriminant condition for $E$.  Theorem~\ref{MainThm9} in the case
$E(K)[3] \not= 0$ now follows from Theorem~\ref{prop:main3}.

Theorem~\ref{thm:geom} follows almost immediately from
Lemma~\ref{lem:geom}. The one detail we must check is how the pencil
of cubics defining $X_E^{\pm}(9)$ in Theorem~\ref{MainThm9} matches up
with the co-ordinates $(r:s)$ on $X_E^{\pm}(3) \isom \PP^1$ in
Theorem~\ref{MainThm3}.  Computing the discriminant of the Weierstrass
equation~(\ref{W3}), we find that the cusps of $X^{\pm}_E(3)$ are the
roots of $f_{\pm}(r,s)=0$ where
\begin{align*}
f_+(r,s) &= 
    r^4 - 6 c_4 r^2 s^2 - 8 c_6 r s^3 - 3 c_4^2 s^4 \\
f_-(r,s) &= c_4 r^4 + 4 c_6 r^3 s + 6 c_4^2 r^2 s^2 
+ 4 c_4 c_6 r s^3 - (3 c_4^3 - 4 c_6^2) s^4.
\end{align*}
On the other hand, writing $\HH(F)$ for the Hessian matrix of $F$, we find that
\[ \det \HH( 3 r F^{\pm}_1 - s F^{\pm}_2 ) = f_{\pm}(r,s) D_{\pm}
(x,y,z,t) \] for some quartic form $D_{\pm} (x,y,z,t)$.  Comparing
with~(\ref{hess-id}) and Lemma~\ref{lem:geom} it follows that the
forgetful map is as claimed in Theorem~\ref{thm:geom}.

To extend the proofs of Theorems~\ref{MainThm9} and~\ref{thm:geom} to
the case $E(K)[3]=0$ we use the following two lemmas.

\begin{Lemma} 
\label{lem:cores}
Let $X,Y,Y'$ be curves defined over $K$. Suppose there
is a commutative diagram
\[ \xymatrix{ Y \ar[d]_{\pi} \ar[r]^{\psi} & Y' \ar[d]^{\pi'} \\ X
  \ar@{=}[r] & X } \] where $\pi$ and $\pi'$ are morphisms defined
over $K$, and $\psi$ is an isomorphism defined over a finite extension
$L/K$.  Suppose that $\pi$ is the map that quotients out by a finite
$K$-rational subgroup $A \subset \Aut(Y)$. If $[L:K]$ and $|A|$ are
coprime then $Y$ and $Y'$ are isomorphic over $K$.
\end{Lemma}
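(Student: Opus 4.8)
The plan is to run a Galois-descent argument: exhibit $Y'$ as a twist of $Y$ whose cohomology class actually lies in $A$, and then annihilate that class using the coprimality hypothesis. Fix an algebraic closure $\Kbar \supseteq L$ and set $G_K = \Gal(\Kbar/K)$ and $G_L = \Gal(\Kbar/L)$, so that $G_L$ is open in $G_K$ of index $[L:K]$ (we are in characteristic $0$, so $L/K$ is separable). First I would form, for $\sigma \in G_K$, the automorphism $a_\sigma = \psi^{-1} \circ \sigma(\psi) \in \Aut(Y_{\Kbar})$; since $Y$ and $Y'$ are defined over $K$ this is a $1$-cocycle, satisfying $a_{\sigma\tau} = a_\sigma\, \sigma(a_\tau)$.

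The key structural step is to show that $a_\sigma$ lies in $A$. Because $\pi$ and $\pi'$ are defined over $K$, applying $\sigma$ to the relation $\pi' \circ \psi = \pi$ gives $\pi' \circ \sigma(\psi) = \pi$, and hence $\pi \circ a_\sigma = \pi' \circ \psi \circ \psi^{-1} \circ \sigma(\psi) = \pi' \circ \sigma(\psi) = \pi$. Since $\pi$ exhibits $X$ as the quotient $Y/A$, the automorphisms of $Y_{\Kbar}$ commuting with $\pi$ form precisely $A$, so $a_\sigma \in A$. Moreover $\psi$ is defined over $L$, so $a_\sigma = \mathrm{id}$ for every $\sigma \in G_L$; thus $[a]$ defines a class in $H^1(G_K, A)$ whose restriction to $H^1(G_L, A)$ is trivial. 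It now suffices to prove that $[a]$ itself is trivial, for a trivialising element $b \in A$ (with $a_\sigma = b^{-1}\sigma(b)$) makes $\psi \circ b^{-1}$ a $G_K$-invariant, hence $K$-rational, isomorphism $Y \to Y'$, which in fact still commutes with the maps to $X$.

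Finally I would deduce $[a] = 0$ from $\gcd([L:K], |A|) = 1$. When $A$ is abelian this is the crux: the composite $\operatorname{cor} \circ \operatorname{res}$ is multiplication by $[G_K : G_L] = [L:K]$, so $[L:K] \cdot [a] = \operatorname{cor}(0) = 0$, while $|A| \cdot H^1(G_K, A) = 0$ because multiplication by $|A|$ already annihilates the finite module $A$; coprimality then forces $[a] = 0$. For general finite $A$ one argues the same way, replacing this computation by the standard fact that restriction to an open subgroup of index prime to $|A|$ has trivial kernel on $H^1$. I expect this vanishing to be the only real obstacle: the construction of the cocycle and the verification that it takes values in $A$ are formal, and it is precisely here that the hypothesis $\gcd([L:K],|A|) = 1$ is used.
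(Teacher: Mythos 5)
Your proposal is correct and follows essentially the same route as the paper: the paper's proof likewise observes that $Y'$ is the twist of $Y$ by a class $\xi \in H^1(K,A)$ whose restriction to $H^1(L,A)$ is trivial, and kills $\xi$ via the restriction--corestriction composite being multiplication by $[L:K]$ together with $|A| \cdot H^1(K,A) = 0$ and coprimality. The only difference is one of detail: you explicitly construct the cocycle $\sigma \mapsto \psi^{-1}\sigma(\psi)$, verify it takes values in $A$ using $\pi' \circ \psi = \pi$, and carry out the final descent, all of which the paper leaves implicit in the phrase ``$Y'$ is the twist of $Y$ by a class $\xi \in H^1(K,A)$.''
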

\begin{proof}
  The curve $Y'$ is the twist of $Y$ by a class $\xi \in H^1(K,A)$
  whose restriction to $H^1(L,A)$ is trivial. Since the composition
\[  H^1(K,A) \stackrel{\operatorname{res}}{\ra} H^1(L,A) 
\stackrel{\operatorname{cores}}{\ra} H^1(K,A) \]
is multiplication by $[L:K]$, and this degree is coprime to $|A|$,  
it follows that $\xi$ is trivial.
\end{proof}

\begin{Lemma}
\label{lem:coprime}
Let $E/K$ be an elliptic curve and $p$ a prime. Then $E(L)[p] \not =
0$ for some finite extension $L/K$ with $[L:K]$ coprime to $p$.
\end{Lemma}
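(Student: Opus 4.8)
The plan is to study the mod-$p$ Galois representation attached to $E$ and to produce a nonzero $p$-torsion point defined over a small extension by a Sylow-subgroup argument. Write $\rho : \Gal(\Kbar/K) \to \Aut(E[p]) \isom \GL_2(\F_p)$ for the action of Galois on the $p$-torsion. Since $\GL_2(\F_p)$ is finite, the image $H := \rho(\Gal(\Kbar/K))$ is a finite group, and $K(E[p])/K$ is a finite Galois extension with group $H$. The whole problem thereby reduces to group theory: I must find a nonzero vector $v \in E[p] \isom \F_p^2$ whose $H$-orbit has size coprime to $p$. Indeed, if $H_v \subseteq H$ denotes the stabiliser of $v$, then the field of definition $L = K(v)$ satisfies $[L:K] = [H : H_v]$, and this index is exactly the orbit size.

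Next I would pass to a Sylow $p$-subgroup $P \subseteq H$. The key input is the standard fixed-point count for $p$-groups: if a $p$-group $P$ acts on a finite set $S$ then $|S^P| \equiv |S| \pmod{p}$. Applying this to $S = E[p]$, which has $|S| = p^2 \equiv 0 \pmod{p}$, gives $|E[p]^P| \equiv 0 \pmod{p}$; since $0$ is always fixed, we obtain $|E[p]^P| \geq p$, so $P$ fixes some nonzero $v \in E[p]$. For this $v$ the stabiliser $H_v$ contains $P$, so $[H : H_v]$ divides $[H : P]$ and is therefore coprime to $p$.

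Taking $L = K(v)$, the fixed field of $\rho^{-1}(H_v)$, completes the argument: $v$ is a nonzero point of $E(L)[p]$, and since $\rho^{-1}(H_v) \supseteq \ker \rho$ we have $[L:K] = [H : H_v]$, which is coprime to $p$. There is no serious obstacle here. The only points needing care are the identification of $[L:K]$ with the orbit size $[H : H_v]$ through the orbit--stabiliser correspondence under $\rho$, and the elementary lemma forcing a $p$-group acting on $\F_p^2$ to fix a nonzero vector; both become routine once the reduction to the finite image $H$ is in place.
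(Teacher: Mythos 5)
Your proposal is correct, but it reaches the nonzero torsion point by a different counting argument than the paper. The paper's proof is a two-line orbit count for the full Galois image: the orbits on $E[p] \setminus \{0\}$ have sizes summing to $p^2 - 1$, which is coprime to $p$, so some orbit has size coprime to $p$, and the field of definition of a point in such an orbit has degree equal to that orbit size. You instead pass to a Sylow $p$-subgroup $P$ of the image $H$ and invoke the fixed-point congruence $|S^P| \equiv |S| \pmod{p}$ for $p$-groups --- itself proved by an orbit count, but for $P$ rather than $H$ --- to find a nonzero $v \in E[p]$ fixed by $P$. Your route costs an appeal to Sylow's theorem that the paper avoids, but it buys a marginally sharper conclusion: since the stabiliser $H_v$ contains a full Sylow $p$-subgroup, $[L:K] = [H:H_v]$ divides the prime-to-$p$ part of $[K(E[p]):K]$, whereas the paper only extracts coprimality to $p$ (all that is needed for its application via Lemma~\ref{lem:cores}). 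Both arguments rest on the same Galois-theoretic dictionary identifying $[K(v):K]$ with the index $[H:H_v]$, i.e.\ the orbit size, which you spell out carefully and the paper leaves implicit; your verification of this point, and of the fact that $\rho^{-1}(H_v) \supseteq \ker \rho$ so that the degree is computed in the finite quotient $H$, is accurate (and unproblematic here since the paper works in characteristic $0$, so $K(E[p])/K$ is Galois).
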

\begin{proof} The orbits sizes for the action of Galois on $E[p]
  \setminus \{0\}$ add up to $p^2 - 1$. Therefore some orbit has size
  coprime to $p$.
\end{proof}

To prove Theorem~\ref{MainThm9} we use Lemma~\ref{lem:coprime} to find
$L/K$ an extension of degree coprime to $3$ with $E(L)[3] \not= 0$. We
already know that the theorem is true over $L$. We apply
Lemma~\ref{lem:cores} with $X = X_E^\pm(3)$, $Y = X_E^{\pm}(9)$ and
$Y' \subset \PP^3$ the curve defined in the statement of
Theorem~\ref{MainThm9}. We further take $\pi$ to be the forgetful map,
and $\pi'$ the map defined in the statement of Theorem~\ref{thm:geom}.
It follows that Theorem~\ref{MainThm9} is true over $K$. The proof of
Theorem~\ref{thm:geom} goes through exactly the same as before.

\begin{Remark}
\label{rem:scale}
If elliptic curves $E$ and $E'$ are quadratic twists, then it is easy
to see that the modular curves $X^\pm_E(n)$ and $X^\pm_{E'}(n)$ are
isomorphic.  With $F_1^\pm$ and $F_2^\pm$ as defined in
Theorem~\ref{MainThm9}, this is borne out by the identities
\begin{align*}
  F^+_1(\lambda^2 a,\lambda^3 b; 
  \lambda^3 x,\lambda^2 y,\lambda z,t ) & = \lambda^6 F^+_1(a,b; x,y,z,t ), \\
  F^+_2(\lambda^2 a,\lambda^3 b; 
  \lambda^3 x,\lambda^2 y,\lambda z,t ) & = \lambda^7 F^+_2(a,b; x,y,z,t ),
\end{align*}
and 
\begin{align*}
  F^-_1(\lambda^2 a,\lambda^3 b; 
  \lambda^2 x,\lambda y,\lambda z,t ) & = \lambda^5 F^-_1(a,b; x,y,z,t ), \\
  F^-_2(\lambda^2 a,\lambda^3 b; 
  \lambda^2 x,\lambda y,\lambda z,t ) & = \lambda^6 F^-_2(a,b; x,y,z,t ).
\end{align*}
\end{Remark}

\medskip

\begin{ProofOf}{Theorem~\ref{thm:ellsurf}}
  Treating $a$ and $b$ as additional variables, the equations for
  $X^\pm_E(9)$ in Theorem~\ref{MainThm9} define a threefold.  Our task
  is to quotient out by the action of ${\mathbb G}_m$ in
  Remark~\ref{rem:scale}, to give a surface birational to
  $\ZZ^{\pm}(9)$.  One way to do this is by setting $a=b$, but this
  gives a highly singular model for $\ZZ^{\pm}(9)$, and does not seem
  to help in finding an elliptic fibration. Instead we make the
  following substitutions.

  We start with the direct case.  Putting $a = 12 s - 3 w^2$ and $b =
  u s + 2 w^3$ in Theorem~\ref{MainThm9}, and expanding in powers of
  $s$, we find
  \begin{align*}
    F^+_1(6 v s - 3 w^3,2 s T - w^2,w,1) &= 12 s^2 (g_0 + 4 s g_1), \\
    F^+_2(6 v s - 3 w^3,2 s T - w^2,w,1) &= 36 s^2 (h_0 + 4 s h_1),
  \end{align*}
  where
  \begin{align*}
    g_0 &= u^2 + 3 u v + 3 v^2 + 9 u w + 6 T v w + 3 (T^2 - 12 T + 24) w^2, \\
    g_1 &= T^3 - 9 T^2 + 36 T - 36, \\
    h_0 &= v^2w - (T - 1) u w^2 + 2 (T - 6) v w^2 + (T^2 - 24 T + 48) w^3, \\
    h_1 &= u + (T^2 - 6 T + 12) v - 3 (T - 2) (T - 6) w.
  \end{align*}
  The plane cubic $\{ g_0 h_1 - g_1 h_0 = 0 \} \subset \PP^2$ is a
  smooth curve of genus one, defined over $\Q(T)$, with rational point
  $(u:v:w) = (12:T-6:-1)$. The elliptic surface in
  Theorem~\ref{thm:ellsurf} is obtained by putting this curve in
  Weierstrass form, for example using the method in
  \cite[Chapter~8]{Ca}.  To simplify the final answer we also made a
  substitution $T \leftarrow 2T + 3$.

  In the reverse case we put $a = 3 u s - 3 w^2$ and $b = 3 v s^2 - 3
  u w s + 2 w^3$. We then compute
\begin{align*}
  F_1^-( -3  s^2 T - w^2, s - w, 2 w, 1 ) &= 9 s^3 q_1, \\
  F_2^-( -3 s^2 T - w^2, s - w, 2 w, 1 ) &= 9 s^3 (w q_1 - s q_2),
\end{align*}
where
\begin{align*} \hspace{-0.3em}
  q_1 &= 3 u^2 - u v - 3 u w + 2 (3 T - 1) u s - 6 v w
  - 3 (2 T - 3) v s + 2 w^2 -  3 T^2 w s + 9 T^2 s^2, 
  \hspace{-0.7em} \\ \hspace{-0.3em}
  q_2 &= 3 (T - 2) u^2 + 3 u v + u w - 2 v^2 - 6 (2 T - 3) v w + 3 (6
  T - 1) v s + 9 T^2 w s + 3 T^3 s^2. \hspace{-0.7em}
\end{align*}
The quadric intersection $\{q_1 = q_2 = 0\} \subset \PP^3$ is a smooth
curve of genus one, defined over $\Q(T)$, with rational point
$(u:v:w:s) = (2:1:1:0)$.  The elliptic surface in
Theorem~\ref{thm:ellsurf} is obtained by putting this curve in
Weierstrass form, for example using the method in
\cite[Chapter~8]{Ca}.  To simplify the final answer we also made a
substitution $T \leftarrow (2 T - 3)/(2 T + 1)$.
\end{ProofOf}

\section{Examples}
\label{sec:ex}

We use Theorems~\ref{MainThm9} and~\ref{thm:geom} to find some
non-trivial pairs of $9$-congruent elliptic curves over $\Q$ and
$\Q(T)$.  By ``non-trivial'' we mean that the elliptic curves are not
isogenous. The examples may be checked independently of our work by
comparing traces of Frobenius.

\subsection{Examples over $\Q$}
We refer to elliptic curves over $\Q$ by their labels in Cremona's tables
\cite{Cr}. For elliptic curves beyond the range of 
Cremona's tables we simply write the conductor followed by a ${\tt *}$.

\begin{Example}
\label{ex9-3}
Let $E$ be the elliptic curve $47775z1$. The equations for $X_E(9)$ in
Theorem~\ref{MainThm9} with $a = -41489280$ and $b = 102867483600$ may
be simplified by substituting \small
\[ \begin{pmatrix} x \\ y \\ z \\ t \end{pmatrix} \leftarrow
\begin{pmatrix}
 2520473760 & 937149484320 & -1998984627360 & -152410870080 \\
 0 & 79644600 & -185343480 & -3827880 \\ 
 0 & -22932 & 47040 & 6468 \\ 0 & -6 & 13 & 1 
\end{pmatrix} \begin{pmatrix} x \\ y \\ z \\ t \end{pmatrix}. \]
\normalsize This gives a model for $X_E(9)$ with equations
\begin{align*}
-x^2 z + x^2 t + 4 x y z + 2 x y t - 3 x z^2 + 2 x z t - 3 x t^2  + 6 y^3 + 
14 y^2 z \hspace{6em} 
 & \\ +~y^2 t + 6 y z^2 - 4 y z t + 9 y t^2 - 6 z^3 + 27 z^2 t
 - 13 z t^2 - t^3 & = 0 \\
-3 x^2 y + 4 x^2 z + 3 x^2 t + 3 x y^2 + 20 x y z - 12 x y t - 3 x z^2  - 32 x z t
+ 25 x t^2  + 21 y^3 & \\ + 16 y^2 z - 24 y^2 t - 12 y z^2 + 100 y z t + 34 y t^2 + 
39 z^3 - 21 z^2 t - 56 z t^2 - 11 t^3 & = 0 \rlap{.}
\end{align*}
On this curve we find rational points $P_1 = (1:0:0:0)$, $P_2 =
(4:-1:-1:0)$ and $P_3 = (1:2:-1:0)$. The corresponding elliptic curves
directly $9$-congruent to $E$ are
\begin{align*}
  & P_1 & & 47775z1 & y^2 + y &= x^3 - x^2 - 32013 x + 2215478 \\
  & P_2 & & 429975*  & y^2 + y &= x^3 - 314688780 x - 2148671872069 \\
  & P_3 & & 494901225* & y ^2 + y &= x^3 - 23634650164230 x -
  21037908383222056594
\end{align*}
Since $X_E^-(9)$ is not locally soluble at $p=7$ there are no elliptic
curves reverse $9$-congruent to $E$.
\end{Example}

In addition to Example~\ref{ex9-3} we have found two further triples
of directly $9$-congruent non-isogenous elliptic curves over
$\Q$. These are
\[ \begin{array}{ll}
  4650j1  & y^2 + x y = x^3 + x^2 - 2700 x + 54000  \\
  553350* & y^2 + x y = x^3 + x^2 - 10472207700 x -
  455228489646000  \\
  1966950* & y^2 + x y = x^3 - x^2 - 20654522386242 x -
  36130051534030639084  \bigskip \\
  27606c1  & y^2 + x y = x^3 - 10289707 x + 12703497719  \\
  358878*   & y^2 + x y = x^3 + 2940333 x - 1416695391  \\
  1242270* & y^2 + x y + y = x^3 - x^2 - 359912 x - 322105301
\end{array} \]
The elliptic curves $1701a1$, $1701g1$ and $22113c1$ are also 
$9$-congruent but only the last two of these
are directly $9$-congruent.

\begin{Example}
  Let $E$ be the elliptic curve $201c1$.  The equations for $X^-_E(9)$
  in Theorem~\ref{MainThm9} with $a = -1029699$ and $b = 402173694$
  may be simplified by substituting \small
\[ \begin{pmatrix} x \\ y \\ z \\ t \end{pmatrix} \leftarrow
\begin{pmatrix}
 -26471709 & -23136696 & 20106774 & -20376135 \\ 
  -45147 & -39828 & 33990 & -34509 \\ 
  90294 & 79332 & -68304 & 69342 \\ 77 & 68 & -58 & 59 
\end{pmatrix} \begin{pmatrix} x \\ y \\ z \\ t \end{pmatrix}. \]
\normalsize 
This gives a model for $X_E^-(9)$ with equations
\begin{align*}
-x^3 + 4 x^2 y + 3 x^2 z - x^2 t + 6 x y^2 + 2 x y z - 2 x y t - 6 x z^2 + 
4 x z t \hspace{2em} 
 & \\ - 11 x t^2 + y^3 + 7 y^2 t - 2 y z^2 + 4 y z t - 4 y t^2 + 6 z^3 - 
7 z^2 t + 4 z t^2 + t^3 & = 0 \\
2 x^3 - x^2 y + 5 x^2 t - 10 x y^2 - 2 x y z + 16 x y t - 3 x z^2 + 4 x z t + 
8 x t^2 \hspace{2em} 
 & \\ - 5 y^3 - y^2 z - 3 y^2 t - y z^2 - 2 y z t + 12 y t^2 + 3 z^3 - 4 z^2 t 
+ 2 z t^2 - 3 t^3 & = 0  \rlap{.}
\end{align*}
On this curve we find the rational point
$(1:-2:-1:0)$. The corresponding elliptic curve reverse 
$9$-congruent to $E$ is 
\begin{align*}
& 374865* & y^2 + x y = x^3 + x^2 - 60068738107 x + 4858035498982726.
\end{align*}
\end{Example}

\subsection{Examples over $\Q(T)$}
Again we start our investigations with the equations for
$X_E^{\pm}(9)$ in Theorem~\ref{MainThm9}.  To find some interesting
examples over $\Q(T)$, we set $a=b=-27j/(4(j-1728))$ to obtain a model
for $\ZZ^\pm(9)$, fibred over the $j$-line.  We then looked for some
rational curves on this surface, by intersecting with coordinate
hyperplanes.

The following example gives a proof of Theorem~\ref{thm:inf} in the
direct case, without going via Theorem~\ref{thm:ellsurf}.

\begin{Example} Let $E/\Q(T)$ be the elliptic curve $y^2 = x^3 + a(T)
  x + b(T)$ where
\begin{align*}
  a(T) & = \tfrac{1}{2} (39 T^4 - 60 T^3 - 162 T^2 + 60 T + 39), \\
  b(T) & = 47 T^6 + 120 T^5 + 21 T^4 + 21 T^2 - 120 T + 47.
\end{align*}
Then on $X_E(9)$, with equations as in Theorem~\ref{MainThm9}, we find
the rational point
\[ (x:y:z:t) = (\tfrac{15}{2}(3 T^4 + 8 T^3 - 2 T^2 - 8 T + 3) : T^2
+1 : 1 :0).\] The corresponding elliptic curve directly $9$-congruent
to $E$ is the elliptic curve directly $3$-congruent to $E$ constructed
in Theorem~\ref{MainThm3} with $c_4 = -a(T)/27, c_6 = -b(T)/54$ and
$(r:s) = (47 T^6 - 78 T^5 - 153 T^4 + 244 T^3 + 153 T^2 - 78 T - 47 :
18 (T^2 + 1) (T^2 + 6 T - 1) )$.  Specialising to $T=0$ gives a pair
of elliptic curves with conductors $80640$ and $5886720$. In
particular these curves are not isogenous.
\end{Example}

The following example gives a proof of Theorem~\ref{thm:inf} in the
reverse case, without going via Theorem~\ref{thm:ellsurf}.

\begin{Example}
  Let $E/\Q(T)$ be the elliptic curve $y^2 = x^3 + a(T) x + b(T)$
  where
  \begin{align*}
    a(T) & = 3 (3 T + 1) (6 T^3 - 3 T - 1) (9 T^3 - 9 T - 4)^2, \\
    b(T) & = 2 (3 T^3 + 27 T^2 + 21 T + 4) (6 T^3 - 3 T - 1)^2 (9 T^3
    - 9 T - 4)^2.
  \end{align*}
  Then on $X^-_E(9)$, with equations as in Theorem~\ref{MainThm9}, we
  find the rational point
  \[ (x:y:z:t) = (-(6 T^3-3 T-1) (9 T^3 - 9 T - 4):T:1:0). \] The
  corresponding elliptic curve reverse $9$-congruent to $E$ is the
  elliptic curve reverse $3$-congruent to $E$ constructed in
  Theorem~\ref{MainThm3} with $c_4 = -a(T)/27, c_6 = -b(T)/54$ and $
  (r:s) = ( (3 T+1) (9 T^3-9 T-4) (6 T^3-3 T-1) (180 T^4 + 321 T^3 +
  216 T^2 + 66 T + 8) : 3 (369 T^6 + 1107 T^5 + 1431 T^4 + 1017 T^3 +
  414 T^2 + 90 T + 8) ).$ Specialising to $T=-1/4$ gives the pair of
  elliptic curves 2304o1 and 343296g1.  In particular these curves are
  not isogenous.
\end{Example}

\section*{Acknowledgements}
I would like to thank Zexiang Chen for some useful discussions that in
particular helped to simplify the proof of Theorem~\ref{MainThm9}.
All computer calculations in support of this work were carried out
using Magma~\cite{Magma}.

\end{document}